\newcommand{\truncateit}[1]{\truncate{0.8\textwidth}{#1}}
\newcommand{\scititle}[1]{\title[\truncateit{#1}]{#1}}
\theoremstyle{plain}
\newtheorem{theorem}{Theorem}[section]
\newtheorem{corollary}[theorem]{Corollary}
\newtheorem{lemma}[theorem]{Lemma}
\theoremstyle{definition}
\newtheorem{definition}[theorem]{Definition}
\newtheorem{remark}[theorem]{Remark}
\def\de{\delta} \def\dl{\partial}   \def\Si{\Sigma} \def\si{\sigma}
\def\su{\subset}    \def\De{\Delta} 
\def\al{\alpha} \def\be{\beta}      \def\ga{\gamma} \def\Ga{\Gamma}  \def\ra{\rightarrow} 
\def\ti{\tilde}     \def\z{\times}  \def\x{\cdot}
   \def\ep{\varepsilon}\def\F{\Phi}    
\newcommand{\R}{\mathbb{R}}
\newcommand{\Z}{\mathbb{Z}}
\newcommand{\Mm}{\mathcal{M}}
\newcommand{\Bb}{\mathcal{B}}
\newcommand{\Nn}{\mathcal{N}}
\newcommand{\Zz}{\mathcal{Z}}
\newcommand{\Cc}{\mathcal{C}}
\newcommand{\Ll}{\mathcal{L}}
\newcommand{\Tt}{\mathcal{T}}
\newcommand{\Nc}{\mathfrak{N}}
\DeclareMathOperator{\mass}{mass}
\DeclareMathOperator{\thick}{thick}
\DeclareMathOperator{\diam}{diam}
\DeclareMathOperator{\dist}{dist}
\DeclareMathOperator{\length}{length}
\DeclareMathOperator{\vol}{vol}
\begin{document}
\begin{abstract}
In this paper, we prove that if $M$ is a closed 4-dimensional Riemannian manifold with trivial first homology group, Ricci curvature  $|Ric|\leq3$, diameter  $\diam(M)\leq D$ and volume  $\vol(M)>v>0$, then the area of a smallest 2-dimensional stationary integral varifold in $M$ is bounded by $F(v,D)$, for some function $F$ that only depends on $v$ and $D$. Our bound for the area is based on the estimation of the first homological filling function of $M$.
\end{abstract}

\scititle{An Upper bound for the smallest area of a minimal surface in manifolds of dimension four}
\author{Nan Wu, Zhifei Zhu}
\date{\today}
\maketitle

\section{Introduction}
In this paper, we prove the following result.
\begin{theorem}\label{thm}
Let $M$ be a closed 4-dimensional Riemannian manifold with Ricci curvature $|Ric|\leq 3$, volume $\vol(M)>v>0$, and diameter $\diam(M)\leq D$. Suppose that the first homology group $H_1(M)$ is trivial, then the area $A(M)$ of a smallest 2-dimensional stationary integral varifold in $M$ satisfies
$$A(M) \leq F(v,D),$$
for some function $F$ that only depends on $v$ and $D$.
\end{theorem}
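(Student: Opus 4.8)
The plan is to produce the required stationary integral varifold by Almgren--Pitts min-max over $2$-parameter families of $2$-cycles, and to reduce the resulting min-max area to the first homological filling function of $M$, which is in turn estimated from the curvature, volume and diameter hypotheses. Since $H_1(M;\Z)=0$, Poincar\'e duality with $\Z/2$ coefficients gives $H_3(M;\Z/2)=0$, so by Almgren's isomorphism $\pi_1(\Zz_2(M;\Z/2))\cong H_3(M;\Z/2)=0$ while $\pi_2(\Zz_2(M;\Z/2))\cong H_4(M;\Z/2)=\Z/2\neq 0$; thus the essential sweepouts are genuinely $2$-parameter. I would set up the min-max over the nonzero class in $\pi_2(\Zz_2(M;\Z/2))$, i.e.\ over $2$-sweepouts detecting $[M]\in H_4(M;\Z/2)$: the isoperimetric inequality on the compact manifold $M$ forces any such family with small supremal area to be null-homotopic in $\Zz_2(M;\Z/2)$, so the width $W$ is strictly positive, and Almgren's min-max (which in codimension $2$ yields a stationary integral varifold, with no regularity claimed) produces a $2$-dimensional stationary integral varifold $V$ with $\mass(V)=W$. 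As $A(M)\le\mass(V)=W$, it suffices to exhibit a single $2$-parameter sweepout of $M$ by $2$-cycles all of whose areas are at most $F(v,D)$.

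\emph{Step 1 (the crux): a uniform bound on the first homological filling function.} The goal here is a function $\Phi=\Phi(v,D,L)$ such that every $1$-cycle $\gamma\subset M$ with $\length(\gamma)\le L$ bounds a $2$-chain of area at most $\Phi$ (it does bound, as $H_1(M)=0$). The hypotheses $|Ric|\le 3$, $\diam\le D$, $\vol>v$ do all the work here, and the difficulty is precisely that without a sectional-curvature or injectivity-radius bound one cannot simply cone off short loops. The natural route is a compactness/contradiction argument: a sequence $M_i$ in the class for which the filling function is unbounded would subconverge in the Gromov--Hausdorff sense to a non-collapsed limit $X$; by the structure theory for manifolds with two-sided Ricci bounds in dimension four (Anderson; Cheeger--Colding; Cheeger--Naber), $X$ is, away from finitely many points, a $C^{1,\alpha}$ Riemannian space with controlled local geometry, and near each singular point the curvature concentrates in simply connected Ricci-flat ALE bubbles forming at definite scales. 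Consequently loops of a given length fill with controlled area --- macroscopically because $H_1(X)=0$, and near the singular points because short loops there still bound small disks inside the (simply connected) bubbles --- and transplanting these fillings back to $M_i$ contradicts the blow-up. Covering $M$ by $N(v,D)$ controlled balls and a Federer--Fleming-type deformation then lets one pass from the filling of short loops to a bound for all $L$. Making the limiting and transplantation argument rigorous across the singular set (or replacing it by an explicit enough construction) is the step I expect to be the main obstacle.

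\emph{Step 2: from the filling bound to a controlled sweepout, and conclusion.} Using Step 1 together with the Bishop--Gromov upper bound on $\vol(M)$ in terms of $D$ and with $\diam(M)\le D$, I would build a pseudo-triangulation of $M$ with at most $N(v,D)$ cells, all of controlled $k$-volume: the edges are short geodesics; each $2$-cell is a $\Phi$-controlled filling of a triangular loop; and the $3$- and $4$-cells are controlled by bootstrapping off the $1$-dimensional filling bound --- a boundary $2$-sphere, being a union of controlled $2$-cells, is swept out by short $1$-cycles, each filled by a small-area $2$-chain, and the trace of this family is a controlled $3$-chain filling it, and similarly one dimension higher. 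On such a pseudo-triangulation one runs the combinatorial sweepout construction in the spirit of Gromov and Guth: sweeping through the controlled $2$-skeleton, with the excursions into the controlled $3$- and $4$-cells needed to pass between sub-$2$-complexes also of bounded area, produces a $2$-parameter family of $2$-cycles with supremal area $\le F(v,D)$, essential because by construction it detects $[M]\in H_4(M;\Z/2)$. Feeding this family into the min-max scheme above yields a stationary integral $2$-varifold $V$ with $\mass(V)\le F(v,D)$, whence $A(M)\le F(v,D)$. The soft inputs --- Almgren's min-max existence and the positivity of the width --- are standard, and the combinatorial sweepout follows a known template once the pseudo-triangulation is in place; the substantive point, and the only place where the Ricci, volume and diameter bounds are used, is the filling estimate of Step 1.
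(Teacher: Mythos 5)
Your overall reduction---bound the smallest stationary integral $2$-varifold by the first homological filling function $HF_1$ of $M$, and bound $HF_1$ using the two-sided Ricci bound, volume lower bound and diameter bound via the Cheeger--Naber structure theory in dimension four---is exactly the paper's strategy. But there are two issues with the execution.

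First, Step 2 reproves a known theorem. The passage from a bound on $HF_1$ to a bound on the area of a stationary integral $2$-varifold is precisely Nabutovsky--Rotman's Theorem (Theorem~\ref{thnr}), which the paper cites: $A(M)\le \frac{(n+1)!}{2}HF_1(2D)$. Your plan to rebuild this via a Guth-type pseudo-triangulation and a $2$-parameter sweepout detecting $[M]\in H_4(M;\Z/2)$ is not wrong in principle, but it is redundant, and carrying it out would in fact require more than a filling bound for $1$-cycles: to control the $3$- and $4$-cells of the pseudo-triangulation and the "excursions" you mention, you would need isoperimetric-type bounds for $2$-cycles and $3$-cycles as well, which do not follow by simple bootstrapping from the $1$-dimensional filling function and which the geometry hypotheses do not obviously supply.

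Second, and more seriously, the crux of Step~1 contains an incorrect claim. You assert that near the singular points of the GH limit, ``short loops there still bound small disks inside the (simply connected) bubbles.'' This fails. In the bubble-tree picture, the neck regions are diffeomorphic to $\R\times S^3/\Gamma$ with $\Gamma\subset O(4)$ nontrivial, so they are \emph{not} simply connected, and a short loop generating $\pi_1(S^3/\Gamma)$ cannot be filled by a disk of area comparable to its length squared: any filling must pass through the body of the bubble, and the resulting area does not tend to zero as the length of the loop shrinks. This is exactly the phenomenon exhibited by the Eguchi--Hanson example in the paper's own Remark, and it is why Theorem~\ref{thm1} necessarily has the additive term $f_2(v,D)$ in the estimate $HF_1(l)\le f_1(v,D)\,l+f_2(v,D)$. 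Your compactness/contradiction scheme, as stated, would yield a bound with no such additive term for short loops, which is false. The paper avoids this trap by working constructively with the bubble-tree decomposition (Theorem~\ref{thm_f}): a given $1$-cycle is split along the necks into pieces lying in single bodies (Lemma~\ref{lm2}); each piece is moved to a geodesic graph adapted to a covering by harmonic balls and filled via a nerve-of-covering argument and Nabutovsky--Rotman's simplicial filling lemma (Lemmas~\ref{lm3},~\ref{lm5}); and for pieces that do not bound within the local subtree, the \emph{finiteness} of $H_1(\Nn^j_i)\cong\Gamma^j_i$ together with Gromov's systolic lemma is used to replace the cycle by a homologous representative of length $\lesssim h(v)\diam(\Nn^j_i)$ in the neck, which is then filled in $M$ at global scale $D$ --- producing the additive contribution (Lemmas~\ref{lm4},~\ref{lm6}). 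You correctly identify the transplantation across the singular set as the main obstacle, and that obstacle is real: the compactness route does not give a self-contained proof here.
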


The k-dimensional stationary integral varifold is a generalization of the k-dimensional minimal submanifold. In \cite{pitts2014existence}, J.Pitts proved the existence of the $k-$stationary integral varifold in a closed Riemannian manifold $M^n$. Specifically, he showed that one could get a non-trivial $k-$stationary integral varifold for all $k \leq i$ whenever $H_i(M^n)$ is non-trivial. Pitts also proved the smoothness of such stationary integral varifold in $M^n$ for $k=n-1$ and $3 \leq n \leq 5$. The existence and regularity results of Pitts were extended by R.Schoen and L.Simon in \cite{schoen1981regularity} to $k=n-1$ and $n \leq 7$. 

In \cite{gromov1983filling}, M.~Gromov asked whether the length of a shortest periodic geodesic in a $n$-dimensional Riemannian manifold $M^n$ can be bounded by $c(n)\vol (M)^{1/n}$. A generalization of this question is whether one can relate the volume of a $k$-dimensional stationary integral varifold to the parameters of the ambient manifold $M^n$. Various results have been obtained in this direction. In \cite{nabutovsky2004volume}, A.~Nabutovsky and R.~Rotman showed the existence of a strongly stationary 1-cycle in a simply connected closed Riemannian manifold $M^n$ whose 1-mass bounded by the diameter of $M^n$. In \cite{glynn2017width}, P.~Glynn-Adey and Ye.~Liokumovich proved an upper bound of the area of a smallest $n-1$-dimensional stationary integral varifold in a closed Riemannian manifold $M^n$ in terms of Ricci curvature lower bound and volume upper bound of $M^n$. Our result is the first one that relates the volume of a $2$-dimensional stationary integral varifold to the Ricci curvature, diameter and volume of the ambient manifold $M$ of dimension $4$.

The proof of Theorem \ref{thm} relies on the relation between the area of $2$-dimensional stationary integral varifold and the first homological filling function $HF_1$ of the ambient manifold $M$ that is proven by A.~Nabutovsky and R.~Rotman. In \cite{nabutovsky2006curvature}, Nabutovsky and Rotman proved the following effective version of the results in \cite{pitts2014existence}.
\begin{theorem}\label{thnr}
Let $M$ be a n-dimensional closed Riemannian manifold with diameter $D$ and the trivial first homology group. Then the area of a $2$-dimensional stationary integral varifold in $M$ satisfies
$$A(M)\leq \frac{(n+1)!}{2}HF_1(2D).$$
\end{theorem}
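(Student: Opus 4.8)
In the language of Almgren--Pitts min--max, the plan is to realize the $2$-dimensional stationary integral varifold as the outcome of min--max over the space $\mathcal Z_2(M;\Z_2)$ of mod-$2$ flat $2$-cycles in $M$, and then to estimate the resulting width by an explicitly constructed family. Since $M$ is a closed $n$-manifold, $H_n(M;\Z_2)\cong\Z_2\neq 0$, so by Almgren's isomorphism theorem $\pi_{n-2}\big(\mathcal Z_2(M;\Z_2)\big)\cong H_n(M;\Z_2)$ is nontrivial, and there is an $(n-2)$-parameter family of $2$-cycles sweeping out $M$ and carrying the fundamental class. Pitts' min--max and regularity theory applied to this family yields a nontrivial $2$-dimensional stationary integral varifold whose area equals the corresponding $(n-2)$-width, and the width is at most the supremum of the areas of the slices of \emph{any} admissible family. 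Hence it suffices to construct one admissible $(n-2)$-parameter family of $2$-cycles all of whose slices have area at most $\tfrac{(n+1)!}{2}\,HF_1(2D)$; then $A(M)\le\tfrac{(n+1)!}{2}HF_1(2D)$.

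To build such a family, fix a fine triangulation $\tau$ of $M$ together with a minimizing geodesic $\gamma_{v,x}$, of length at most $\diam(M)\le D$, from each vertex $v$ of $\tau$ to each point $x\in M$, and work first with a discrete $(n-2)$-parameter sweepout indexed by the vertices of a fine subdivision of the parameter cube. To each index one associates a mod-$2$ $2$-cycle produced by an iterated-cone construction over $\tau$: following the $n+1$ vertices of each simplex in a chosen order one successively cones along the geodesics $\gamma_{v,x}$, and wherever the chosen minimizing geodesic jumps one inserts a $2$-chain bounding the resulting loop. That loop is a concatenation of two minimizing geodesics, so it has length at most $2D$, and since $H_1(M)=0$ it bounds a $2$-chain of area at most $HF_1(2D)$. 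Arranging the combinatorics so that each associated $2$-cycle, and each difference between the cycles associated to adjacent indices, is a sum of at most $\tfrac{(n+1)!}{2}$ of these fillings --- the factorial accounting for the orderings of the $n+1$ vertices of a simplex and the factor $2$ for the pair of geodesics at a jump --- produces a discrete sweepout every slice of which has area at most $\tfrac{(n+1)!}{2}HF_1(2D)$. The interpolation machinery of Almgren and Pitts then upgrades this to a genuine continuous admissible $(n-2)$-parameter family that detects the generator of $\pi_{n-2}(\mathcal Z_2(M;\Z_2))$, with the same area bound up to an arbitrarily small error.

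The step I expect to be the main obstacle is organizing the iterated-cone construction so that the two crucial estimates hold at once: that every loop ever handed to the filling inequality is built from exactly two minimizing segments --- so that its length never exceeds $2D$, which is precisely what yields the constant $2D$ rather than a cruder $3D$ or $nD$ --- and that each slice of the discrete sweepout, together with each elementary transition between slices, is a sum of at most $\tfrac{(n+1)!}{2}$ such fillings. One must then carry out the standard but delicate passage between discrete and continuous sweepouts in the Almgren--Pitts framework while preserving both the area bound and the property of representing the fundamental class. Granting this, Pitts' min--max produces a nontrivial $2$-dimensional stationary integral varifold of area at most the width, which is at most $\tfrac{(n+1)!}{2}HF_1(2D)$, as claimed.
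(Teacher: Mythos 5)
First, a point of reference: the paper does not prove Theorem~\ref{thnr} at all --- it is quoted from Nabutovsky--Rotman \cite{nabutovsky2006curvature} and used as a black box to pass from Theorem~\ref{thm1} to Theorem~\ref{thm}. So there is no in-paper proof to compare against; your proposal has to stand on its own. The overall plan you describe (bound the $(n-2)$-width of $\mathcal{Z}_2(M)$ by an explicit family built from iterated cones over a triangulation, fill the digons formed by pairs of minimizing geodesics of length at most $2D$ using $H_1(M)=0$, count $\tfrac{(n+1)!}{2}$ fillings per slice, then invoke Pitts) is indeed the right kind of strategy for this statement, and it is consistent with how such curvature-free bounds are obtained.

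The difficulty is that the entire mathematical content of the theorem sits in the step you defer. The sentence ``arranging the combinatorics so that each associated $2$-cycle, and each difference between adjacent indices, is a sum of at most $\tfrac{(n+1)!}{2}$ of these fillings \dots produces a discrete sweepout'' is not an argument: it restates the conclusion, with the constant reverse-engineered from the answer rather than derived from a construction. Concretely, you have not (i) specified the parameter space and the slice assigned to each parameter, nor why each slice is a \emph{cycle} (the iterated cone over a simplex naturally produces chains whose boundaries must cancel across adjacent simplices, and checking this cancellation is where the ordering of the $n+1$ vertices and the factor $\tfrac{1}{2}$ actually arise); (ii) verified that every loop handed to the filling inequality consists of exactly two minimizing segments, which is what gives $2D$ rather than $3D$ or $nD$; (iii) shown the resulting family represents the nontrivial class in $\pi_{n-2}(\mathcal{Z}_2(M;\Z_2))$, so that the width is positive and min--max produces a nontrivial varifold; or (iv) checked the fineness/no-concentration hypotheses needed for the Almgren--Pitts discrete-to-continuous interpolation. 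Items (iii) and (iv) are standard; items (i) and (ii) are the heart of the proof and are missing. One smaller correction: for $2$-dimensional varifolds in an $n$-manifold with $n\geq 4$ the codimension is at least $2$, so Pitts' theory yields a stationary integral varifold but no regularity; you should drop the appeal to ``regularity theory'' --- fortunately the theorem claims only a stationary integral varifold, so nothing is lost.
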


Homological filling function is extensively studied and applied in geometric group theory and Riemannian geometry (cf. \cite{gromov1983filling}). It provides the largest mass of a 2-chain with integer coefficients that fills a 1-cycle with given 1-mass upper bound. Before we give the formal definition, let us first introduce some basic concepts in geometric measure theory, \cite{federer2014geometric}.
\begin{definition}\label{def2}
Let $M$ be a Riemannian manifold. The space of singular Lipschitz $k$-chains in $M$ with coefficient group $\Z$ is $\Cc_k(M;\Z)=\{\sum_i a_if_i|f_i:\De^k\ra M \text{ a Lipschitz map}, a_i\in \Z \}$. Let $\dl_k:\Cc_k\ra \Cc_{k-1}$ be the boundary map. We define the space of $k$-cycles to be $\Zz_k(M;G)=\ker \dl_k$.
\end{definition}

We define the mass of a singular Lipschitz $k$-chain to be the following.

\begin{definition}\label{def3}
Let $C=\sum_i a_if_i\in \Cc_k(M;\Z)$. We define the $k$-mass of $C$ to be:
$$\mass_k (C)=\sum_i |a_i|\int_{\De_k}f_i^*(d \vol_g),$$
where $d \vol_g$ is the Riemannian volume form on $M$.
\end{definition}

Now we define the first homological filling function of a Riemannian manifold $M$ as the following.

\begin{definition}
Let $M$ be a closed Riemannian manifold with trivial first homology group. The first homological filling function $HF_1:\R_{\geq0}\ra \R_{\geq 0}$ is defined by
$$HF_1(l)=\sup_{\mass_1(Z)\leq l}\left(\inf_{\dl C=Z} \mass_2(C)\right),$$
where the supremum is taken over all singular Lipschitz $1$-cycle $Z\in \Zz_1(M;\Z)$ with mass bounded by $l$.
\end{definition}

In this paper, we are going to majorize the first homological filling function of a 4-dimensional Riemannian manifold $M$ by its Ricci curvature, volume and diameter. More precisely, we prove that:
\begin{theorem}\label{thm1}
Let $M$ be a closed 4-dimensional Riemannian manifold with Ricci curvature $|Ric|\leq 3$, volume $\vol(M)>v>0$, and diameter $\diam(M)\leq D$. Suppose that the first homology group $H_1(M)$ is trivial, then every Lipschitz singular 1-cycle $C\in \Zz_1(M,\Z)$ bounds a singular 2-chain in $M$ whose 2-mass is bounded by $f_1(v,D)\cdot \mass_1(C)+f_2(v,D)$, where $f_1$ and $f_2$ are functions that only depend on $v$ and $D$.

In particular, the first homological filling function $HF_1$ of $M$ satisfies
$$HF_1(l)\leq f_1(v,D)l+f_2(v,D).$$
\end{theorem}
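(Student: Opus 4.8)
The plan is to replace the filling problem in $M$ by the same problem in a \emph{fixed finite} simplicial complex of controlled size — the nerve $\Nn$ of a good cover of $M$ by small metric balls — where the filling function is automatically linear, and then to transport the resulting $2$-chain back to $M$ without losing control of its mass. Throughout, ``controlled'' means ``bounded by a constant depending only on $v$ and $D$''.

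First I would use the hypotheses to build a quantitatively controlled good cover. Since $|Ric|\le 3$ and $\diam(M)\le D$, Bishop--Gromov gives $\vol B(p,1)\ge c(v,D)>0$ for every $p\in M$, so $M$ is non-collapsed; by Anderson's harmonic radius estimate the $C^{1,\alpha}$-harmonic radius of $M$ is then bounded below by some $r_0=r_0(v,D)>0$, and on balls of radius $\le r_0$ the metric is uniformly bi-Lipschitz to the Euclidean one. Fix $\rho=\rho(v,D)$ small relative to $r_0$, let $\{p_1,\dots,p_N\}$ be a maximal $\rho$-separated set (so $N\le N(v,D)$ by Bishop--Gromov and $\vol(M)>v$), and put $\mathcal U=\{B_i=B(p_i,2\rho)\}_{i=1}^N$. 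This cover has multiplicity $\le m(v,D)$, and — $\rho$ being small relative to $r_0$ — every nonempty intersection of members of $\mathcal U$ lies in a single harmonic chart and is contractible. Let $\Nn$ be the nerve of $\mathcal U$; it is a simplicial complex on at most $N$ vertices, and by the nerve lemma $\Nn\simeq M$, so $H_1(\Nn;\Z)=0$. A Lipschitz partition of unity subordinate to $\mathcal U$ yields a Lipschitz map $\ph\colon M\ra\Nn$, and the contractibility of the double and triple intersections lets me build a Lipschitz map $\psi$ on the $2$-skeleton $\Nn^{(2)}$ with $\psi(i)=p_i$, $\psi$ sending each edge $ij$ to a path of length $\lesssim\rho$ in $B_i\cap B_j$, and each $2$-simplex to a disk of area $\lesssim\rho^2$ in the corresponding triple intersection. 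The key geometric point, which uses the bi-Lipschitz-Euclidean structure of the $B_i$, is that $\psi\circ\ph$ differs from $\mathrm{id}_M$ by a Lipschitz homotopy each of whose tracks has length $\lesssim\rho$.

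Now let $C\in\Zz_1(M;\Z)$ be a Lipschitz $1$-cycle with $\mass_1(C)\le l$. Components of $C$ of length $<\rho$ each lie in a single ball, where they bound a $2$-chain of mass $\lesssim\rho\cdot(\text{length})$; these contribute $\lesssim\rho\,l$ in total, so discard them. Subdivide the rest into arcs of length $\sim\rho$, push forward by $\ph$ and deform into $\Nn^{(1)}$, obtaining a simplicial $1$-cycle $\hat C$ in $\Nn^{(1)}$ whose $\ell^1$-mass is $\lesssim l/\rho$; the homotopy $\psi\ph\simeq\mathrm{id}$ together with this deformation, pushed into $M$, gives a Lipschitz $2$-chain $P$ with $\dl P=C-\psi_\#\hat C$ (modulo the discarded loops) and $\mass_2(P)\lesssim\rho\,l$. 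Next I fill $\hat C$ inside $\Nn$. Because $H_1(\Nn;\Z)=0$, the map $\dl_2\colon\Cc_2(\Nn;\Z)\ra\Zz_1(\Nn;\Z)$ is onto; fixing a $\Z$-basis of the lattice $\Zz_1(\Nn;\Z)$ and a $\dl_2$-preimage of each basis element gives a right inverse of $\dl_2$ that increases $\ell^1$-mass by at most a factor $C(\Nn)$, so $\hat C=\dl_2\hat W$ for a simplicial $2$-chain $\hat W$ in $\Nn^{(2)}$ of $\ell^1$-mass $\lesssim C(\Nn)\cdot l/\rho$. Since there are only finitely many simplicial complexes on $\le N$ vertices, $C(\Nn)\le C^*(N)=C^*(v,D)$. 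Pushing forward, $\psi_\#\hat W$ is a Lipschitz $2$-chain in $M$ with $\dl(\psi_\#\hat W)=\psi_\#\hat C$ and $\mass_2(\psi_\#\hat W)\le(\ell^1\text{-mass of }\hat W)\cdot\max_\sigma\Area(\psi(\sigma))\lesssim C^*(v,D)\,\rho\,l$. Summing the three pieces, $C=\dl\big(P+\psi_\#\hat W+(\text{loop fillings})\big)$, a Lipschitz $2$-chain of $2$-mass $\le f_1(v,D)\,l+f_2(v,D)$; taking the supremum over $\mass_1(C)\le l$ gives the asserted bound on $HF_1$.

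The step I expect to be the main obstacle is the first one: producing the good cover and the maps $\ph,\psi$ with \emph{all} the relevant quantities — the number of balls, the multiplicity, the contractibility of the intersections, the bi-Lipschitz constants, the lengths and areas of $\psi(\text{edge})$ and $\psi(\text{simplex})$, and the size of the homotopy $\psi\ph\simeq\mathrm{id}$ — controlled by $v$ and $D$ alone. This is exactly where the two-sided Ricci bound $|Ric|\le 3$ enters, through Anderson's harmonic radius estimate; a mere Ricci lower bound would yield only Gromov--Hausdorff, not bi-Lipschitz, control of small balls, which is too weak to build $\psi$ with the required area bounds. Given the controlled cover, the rest is bookkeeping resting on the elementary fact that homological filling in a fixed finite complex with trivial $H_1$ is linear in $\ell^1$-mass.
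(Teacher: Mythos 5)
There is a genuine gap, and it sits exactly where you predicted: the very first step. Under the hypotheses $|Ric|\le 3$, $\vol(M)>v$, $\diam(M)\le D$ in dimension $4$, the harmonic radius is \emph{not} uniformly bounded below by any $r_0(v,D)$. Anderson's harmonic radius estimate requires more than non-collapsing plus a two-sided Ricci bound (e.g.\ a lower injectivity radius bound, or an $\ep$-regularity hypothesis on the curvature in the ball); with only the stated hypotheses, curvature can concentrate at finitely many points while the volume of unit balls stays bounded below. The paper's own motivating example makes this concrete: Ricci-flat Eguchi--Hanson metrics with parameter $a_i\to 0$ are non-collapsed, yet the harmonic radius degenerates along the shrinking exceptional set. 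Consequently the good cover by balls of a definite radius $\rho(v,D)$ with contractible (indeed, chart-contained) intersections does not exist, and neither do the maps $\ph,\psi$ with the area bounds you need. A second, decisive symptom: your argument yields a purely linear bound $HF_1(l)\le f_1(v,D)\,l$ with no additive constant, and that statement is false in this class of manifolds. In the Eguchi--Hanson example one has loops of length $l_i\to 0$ (generators of $\pi_1$ of the base $\R P^2$ in the neck) whose smallest filling has $2$-mass bounded below by a quantity proportional to the diameter; so $HF_1(l_i)\not\to 0$. This is exactly the step ``components of $C$ of length $<\rho$ lie in a single ball, where they bound a $2$-chain of mass $\lesssim \rho\cdot(\text{length})$'' failing: small metric balls here need not be simply connected.

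The paper's proof is built precisely to circumvent this. It uses the Cheeger--Naber bubble-tree decomposition of $M$ into body regions (where the harmonic radius is bounded below only \emph{relative to the diameter of that body}, which may itself be tiny) and neck regions diffeomorphic to $\R\z S^3/\Ga$ with $|\Ga|$ controlled. A given $1$-cycle is first decomposed into pieces lying in single bodies; each piece either bounds in the subtree below that body --- in which case a nerve/geodesic-graph argument of the kind you propose is run \emph{locally}, with the crucial point that the relevant geodesic triangles have size comparable to the (possibly small) local scale $R_i^j$, so their fillings cost only $O((R_i^j)^2)$ each --- or it must be pushed out through a neck, where finiteness of $H_1(\Nn_i^j)\cong\Ga_i^j$ and Gromov's short-basis lemma replace it by a cycle of controlled length, at the cost of the additive term $f_2(v,D)$. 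If you want to salvage your approach, you would need to restrict the uniform-cover argument to the body regions and add the neck analysis; as written, the proposal proves a statement that the geometry refutes.
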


\begin{remark}
Combined with Theorem~\ref{thnr}, Theorem~\ref{thm1} immediately implies our main result Theorem~\ref{thm}, where $F(v,D)=120f_1(v,D)D+60f_2(v,D)$.
\end{remark}

\begin{remark}
Note that the functions $f_1$ and $f_2$ in the above Theorem~\ref{thm1} are not explicit in general. In the proof of the above theorem, we used the ``bubble tree'' construction in the work of J.~Cheeger and A.~Naber  \cite{cheeger2014regularity}. This construction depends on ``$\ep$-regularity'' Theorem \cite[Theorem 2.11]{cheeger2014regularity}. If one can write down an explicit expression for the constants in the ``$\ep$-regularity'' Theorem, then we can write down a formula for the functions $f_1$ and $f_2$. In particular, if $M$ is an Einstein manifold, then by the work \cite{anderson1989ricci} and \cite{anderson1992thel} of M.~Anderson, one can estimate $\ep \sim C\x v^{-1/2}$.

Note that the work of Cheeger and Naber uses the previous theory of manifolds with Ricci curvature bounded below developed by J.~Cheeger and T.~Colding \cite{colding1997ricci}, \cite{cheeger1996lower}, \cite{cheeger1997structure} and work of M.~Anderson \cite{anderson1989ricci}, \cite{anderson1992thel} and Cheeger and Anderson \cite{anderson1991diffeomorphism}.
\end{remark}

\begin{remark}
The second term $f_2(v,D)$ in the above estimation may not be eliminated. One can consider the following motivating example. We consider a sequence of Riemannian manifolds with boundary $\{(M_i,\dl M_i)\}$ where each $M_i$ is diffeomorphic to the disk bundle over $\R P^2$, equipped with the Eguichi-Hanson metric
$$ds^2=\left(1-\frac{a_i}{r^4}\right)^{-1}dr^2+\frac{r^2}{4}\left(1-\frac{a_i}{r^4}\right)\si_3^2+\frac{r^2}{4}(\si_1^2+\si_2^2),$$
where $\si_1, \si_2$ and $\si_3$ are the 1-forms on $S^3/\Z^2$, and $a_i\ra 0$. Note that $M_i$ is Ricci flat. The boundary of $M_i$ is a lens space, hence is orientable. One may fill the boundary $\dl M_i$ by a simply-connected 4-manifold and the resulting manifold will have trivial first homology group (cf. \cite{lickorish1962representation}). In this case, we consider a generator of the fundamental group of the base space $\R P^2$. The length of this generator $l_i\ra 0$ as $a_i\ra 0$. However, this circle does not bound any 2-chain in the interior of $M_i$. Therefore, a 2-chain that fills this circle must leave the boundary $\dl M_i$ and the size of this 2-chain is proportional to the diameter of the manifold regardless how small $l_i$ is.
\end{remark}

 Let us describe the idea of the proof of the above Theorem~\ref{thm1}. We denote by $\Mm_1(4,v,D)$, the set of closed 4-dimensional Riemannian manifolds with $H_1(M)=0$, Ricci curvature $|Ric|\leq 3$, volume $\vol(M)>v>0$, and diameter $\diam(M)\leq D$.

Suppose that $M$ is a manifold in $\Mm_1(4,v,D)$ and $C\in \Zz_1(M;\Z)$ a 1-cycle. We would like to construct a filling of the cycle $C$ whose 2-mass is bounded in terms of $\mass_1(C)$, $v$, and $D$. The construction of this filling is based on the results proved by A.~Nabutovsky and R.~Rotman in \cite{nabutovsky2003upper} and J.~Cheeger and A~.Naber in \cite{cheeger2014regularity}.

Let us begin with a simple case where the manifold $M$ is covered by some contractible metric balls of radius $r$, where $r$ is a fixed constant that is less than the harmonic radius of $M$. We further assume that the the number of the balls in this covering is $N$. In this case, following from the methods developed in \cite{nabutovsky2003upper}, one can show that the cycle $C$ bounds a 2-chain in $M$ with $2$-mass bounded by $F(N)D\cdot \mass_1(C)$. The idea of the proof in this case is the following. Let $\Nc$ be the nerve of the covering of $M$. We consider a geodesic graph $\Si$ which is obtained by connecting the center of the intersecting balls in the covering by minimizing geodesics. There is a natural map from $\Si$ to the one-skeleton $\Nc^{(1)}$ of the nerve. Given a 1-cycle $C$ with $\mass_1(C)=L$. We first look for a 1-cycle $C_1$ in $\Si$ which is homologous to $C$ such that $C-C_1$ bounds a 2-chain in $M$ with mass $c\cdot Lr$ for some constant $c$. We then map the 1-cycle $C_1$ to the nerve $\Nc$ to get a cycle $C_2$ with simplicial length $L/r$. The estimations in \cite{nabutovsky2003upper} show that one can always fill this cycle $C_2$ in $\Nc$ with no more than $F(N)\cdot L/r$ 2-simplices, where $F$ is a function that only depends on the number of the balls $N$ in the covering. Now each simplex in this case essentially corresponds to a filling of a geodesic triangle in $\Si$ whose area is $O(r^2)$. Therefore in this case, one can fill $C_1$ by a 2-chain in $M$ with mass bounded by $F(N)\cdot L/r\cdot r^2=F(N)Lr\leq F(N)LD$.

However, in general, it is difficult to cover the manifold $M$ by a certain amount of the contractible metric balls of definite radius. In the case where $M\in \Mm_1(4,v,D)$, Cheeger and Naber proved in \cite{cheeger2014regularity} that one can cover the manifold by $N(v,D)$ harmonic balls but the radius of these metric balls is not uniformly bounded from below. They further proved that the manifold $M$ decomposes into some body regions $\Bb^i_j$ and some neck regions $\Nn^i_j$, which leads to a finite diffeomorphism type theorem of manifolds in $\Mm_1(4,v,D)$. The topology of the neck regions is controlled in the sense that $\Nn^i_j\cong \R\z S^3/\Ga^i_j$ for some discrete subgroup $\Ga^i_j\su O(4)$. (See \cite[Theorem 8.64]{cheeger2014regularity}).

Note that in this case if one applies the same argument as above, one may not get any bound for the mass of the 2-chain that fills $C_1$. For example, if the manifold is covered by contractible metric balls of two different sizes $r$ and $R$, where $R>>r$, then when we map the chain $C_1$ to $\Nc$, we may get a chain with 1-mass approximately $L/r$, but the 2-mass of the filling of this chain can be $F(N)L/r\cdot R^2 \thicksim F(N)LD\cdot(R/r)$, which is not bounded in terms of $v,D$.

Therefore, in this case, instead of looking for a 1-cycle $C_1$ in $\Si$, we are going to decompose the image of $C$ into the bodies $\Bb^i_j$ and the necks $\Nn^i_j$. We will show that one can find a decomposition $C=\sum_{k=1}^N C_k$ such that each 1-cycle $C_k$ either bounds a 2-chain in $\Bb^i_j$ or it is homologous to a cycle $C_k'$ contained in the consecutive neck region $\Nn^i_j$. In the first case, we will use the same argument as above to estimate the 2-mass of a 2-chain that fills $C_k$,  since the regions $\Bb^i_j$ can be covered by some contractible metric balls of definite size. And in the later case, using the geometry of the region $\Nn^i_j$, we show that both $C_k-C_k'$ and $C_k'$ bound some 2-chains in $M$ with controlled 2-mass.

\section{Preliminaries}
\subsection{Finite diffeomorphism type theorem and the bubble tree decomposition}
Let us first introduce the notion of harmonic radius.

\begin{definition}\label{def2_h}
Let $M^n$ be an $n-$dimensional Riemannian manifold. We define the harmonic radius $r_h(x)$ at $x\in M$ to be the largest $r>0$ such that there exists a map $\F: B_r(0^n)\ra M$, where $0^n\in \R^n$ is the origin, that satisfies the following:
\begin{enumerate}
\item $\F$ is a diffeomorphism onto its image with $\F(0^n)=x$.
\item $\De_g x^l=0$, $l=1,\dots,n$, where $x^l$ are the coordinate functions and $\De_g$ is the Laplace-Beltrami operator.
\item\label{equ1} If $g_{ij}=\F^*(g)$ is the pullback metric on $B_r(0^n)$ and $\de_{ij}$ is the standard Euclidian metric, then
$$
||g_{ij}-\de_{ij}||_{C^0,B_r(0^n)}+r||\dl_k g_{ij}||_{C^0,B_r(0^n)}\leq 10^{-3}.
$$
\end{enumerate}
\end{definition}

The above map $\F$ is called a harmonic coordinate. Note that the third condition above essentially tells us that the map $\F$ is a Lipschitz map in the following sense.

\begin{lemma}\label{lem1_c}
Let $M$ be a Riemannian manifold and $x\in M$. Suppose $r_h(x)>0$ is the harmonic radius at $x$ in Definition~\ref{def2_h} and $\F: B_{r_h(x)}(0^n)\ra M$ the harmonic coordinate. For any curve $\ga:[0,1]\ra B_{r_{h}(x)}$ whose image is contained in $B_{r_{h}(x)}$, the length of the curve $\F^{-1}(\ga)$ satisfies
$$\sqrt{1-10^{-3}}\cdot\length(\ga) \leq \length(\F^{-1}(\ga))\leq\sqrt{1+2\cdot10^{-3}}\cdot\length(\ga).$$
Moreover, we have $B_{r_{h}/2}(\Phi(0^n)) \subset \Phi(B_{r_{h}}(0^n))$.
\end{lemma}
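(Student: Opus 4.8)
The plan is to extract everything from a single input: condition~(\ref{equ1}) in Definition~\ref{def2_h}, which already compares the pulled-back metric with the Euclidean one essentially on the nose. The first assertion is then that pointwise comparison integrated along a curve, and the ``moreover'' clause is a short topological ``no escape'' argument that feeds on the first assertion.

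For the length estimate I would set $\varepsilon=10^{-3}$ and $g_{ij}=\Phi^*(g)$ on $B_{r_h(x)}(0^n)$. The bound $\|g_{ij}-\delta_{ij}\|_{C^0}\le\varepsilon$ coming from~(\ref{equ1}) forces, at every point $p$ and every $v\in\R^n$,
\[
(1-\varepsilon)\,|v|^2\ \le\ g_{ij}(p)\,v^iv^j\ \le\ (1+\varepsilon)\,|v|^2,
\]
where $|\cdot|$ is the Euclidean norm (this is the step where one spells out whatever convention the $C^0$-norm of a matrix field carries; with the operator-norm convention it is immediate, and any reasonable convention costs at most a harmless further multiple of $\varepsilon$). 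Now, for a curve $\gamma$ with image in $\Phi(B_{r_h(x)}(0^n))$, writing $\eta=\Phi^{-1}\circ\gamma$ for the curve it pulls back to, one has $\length(\gamma)=\int_0^1\sqrt{g_{ij}(\eta)\dot\eta^i\dot\eta^j}\,dt$ and $\length(\eta)=\int_0^1|\dot\eta|\,dt$; integrating this and taking square roots gives $\frac{1}{\sqrt{1+\varepsilon}}\length(\gamma)\le\length(\eta)\le\frac{1}{\sqrt{1-\varepsilon}}\length(\gamma)$, and since $\sqrt{1-\varepsilon}\le\frac{1}{\sqrt{1+\varepsilon}}$ and $\frac{1}{\sqrt{1-\varepsilon}}\le\sqrt{1+2\varepsilon}$ when $\varepsilon=10^{-3}$, this is exactly the pair of inequalities claimed.

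For the containment I would put $x=\Phi(0^n)$ and $U=\Phi(B_{r_h}(0^n))$. Since $\Phi$ is a diffeomorphism onto its image between manifolds of the same dimension it is a local diffeomorphism, hence an open map, so $U$ is an open neighbourhood of $x$ in $M$. Suppose $y\notin U$ and let $c\colon[0,1]\to M$ be an arbitrary path from $x$ to $y$; let $t_1$ be the supremum of the $t$ with $c([0,t])\subset U$, so that $c([0,t_1))\subset U$ and $c(t_1)\in\partial U$. The pulled-back curve $w=\Phi^{-1}\circ c$ on $[0,t_1)$ must satisfy $|w(t)|\to r_h$ as $t\to t_1$: otherwise some sequence $w(t_k)$ would remain in a compact subball $\overline{B_{r_h-\delta}(0^n)}$, a subsequence would converge inside $B_{r_h}(0^n)$, and $c(t_1)$ would be forced back into $U$. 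Hence the Euclidean length of $w$ over $[0,t_1)$ is at least $\sup_t|w(t)|=r_h$ (using $w(0)=0^n$), and by the length estimate above the $g$-length of $c$ over $[0,t_1)$ is at least $r_h/\sqrt{1+2\cdot10^{-3}}$, which is greater than $r_h/2$. Taking the infimum over all paths $c$ from $x$ to $y$ gives $\dist(x,y)>r_h/2$; the contrapositive is exactly $B_{r_h/2}(x)\subset U$.

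I do not anticipate a genuine obstacle. The two points that deserve a word of care are the topological claim that $U=\Phi(B_{r_h}(0^n))$ is genuinely open in $M$ — needed for ``$c$ first leaves $U$'' and $c(t_1)\in\partial U$ to be meaningful — which rests on $\Phi$ being a local diffeomorphism, and the observation that a boundary point of $U$ is a limit of points $\Phi(z_k)$ with $|z_k|\to r_h$, which is precisely what drives the Euclidean length of $w$ up to the full radius $r_h$. The factor $1/2$ in the radius is exactly the slack that keeps $r_h/\sqrt{1+2\cdot10^{-3}}$ comfortably above $r_h/2$, so up to that small constant the statement is essentially sharp.
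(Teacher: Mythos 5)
Your proof is correct and takes essentially the same route as the paper: read off from condition~(\ref{equ1}) a pointwise comparison of $\Phi^*g$ with the Euclidean metric, integrate to get the two length inequalities, and then use the length inequality to bound below the length of any curve from $\Phi(0^n)$ exiting $\Phi(B_{r_h}(0^n))$. You fill in details the paper leaves implicit — the pointwise quadratic-form comparison and, in the containment step, the topological first-exit argument (the paper simply asserts the existence of a distance-realizing curve from $\Phi(0^n)$ to $\partial\Phi(B_{r_h}(0^n))$) — so your version is if anything more careful than the original.
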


\begin{proof}
The proof follows from direct computation. Note that by condition (3) above, the metric $\F^*(g)$ on $B_r(0^n)$ satisfies
$$
||g_{ij}-\de_{ij}||_{C^0,B_r(0^n)}+r||\dl_k g_{ij}||_{C^0,B_r(0^n)}\leq 10^{-3}.
$$
If $\ga$ is a curve realizing distance from $\partial\Phi(B_{r_{h}}(0^n)) $ to $\Phi(0^n))$, then $\Phi^{-1}(\ga)$ is a curve connecting $\partial B_{r_{h}}(0^n)$ with $0^n$. Therefore,
$$\length(\ga) \geq \length(\F^{-1}(\ga))/\sqrt{1+2\cdot10^{-3}} \geq r_h/\sqrt{1+2\cdot10^{-3}} \geq r_h/2.$$ The conclusion follows.
\end{proof}

In \cite{cheeger2014regularity}, Cheeger and Naber proved the finiteness of the number of diffeomorphism type of manifolds $M$ of dimension $4$ with $|Ric_M|\leq3$, $\vol(M)>v>0$ and $\diam(M)<D$. This theorem is based on the construction of the ``bubble tree'' decomposition of the manifolds (\cite[Theorem 8.64]{cheeger2014regularity}). Their construction is the following.

Up to rescaling, we first cover the manifold $M$ by metric balls $\{B_1(x_i)\}$ such that the balls in $\{B_{1/4}(x_i)\}$ are pairwise disjoint. By a standard volume comparison argument, there are at most $N_0(v,D)$ such balls. In each ball $B_1(x_i)$, there exist scales $r_j^1>r_0(v,D)$, an integer $N_1\leq N(v,D)$ and a collection of balls $\{B_{r_j^1}(x_j^1)\}_{j=1}^{N_1}$ such that
if $x\in B_1(x_i)\setminus \cup_j B_{r_j^1}(x_j^1)$, then the harmonic radius $r_h(x)\geq r_0(v,D)$, for some function $r_0$ and $N$ which only depends on $v$ and $D$. Furthermore, the balls $\{ B_{2r_j^1}(x_j^1)\}$ are disjoint. In total, there are at most $N_0\x N$ many such balls. We define the first body $\Bb^1=M\setminus \cup_j B_{r_j^1}(x_j^1)$ and the necks $\Nn_j^2=B_{2r_j^1}(x_j^1)\cap \Bb^1$. Note that the manifold $M=\Bb^1\cup (\cup_j B_{2r_j^1}(x_j^1))$. As proved in Theorem~8.6 and Lemma~8.40 in \cite{cheeger2014regularity}, the geometry of these $\Nn_j^2$ are controlled. Namely, there is a diffeomorphism $F: A_{r_j^1/2,2r_j^1}(0)\ra \Nn_j^2$, where $A_{r_j^1/2,2r_j^1}(0)$ is an annulus centered at $0\in \R^4/\Ga_j^2$ for some finite discrete subgroup $\Ga_j^2\su O(4)$ such that if $g_{ij}=F^*g$ is the pullback metric,
\begin{equation}\label{equ2}
||g_{ij}-\de_{ij}||_{C^0}+r_j^1\cdot|| \dl g_{ij}||_{C^0}\leq \ep(v).
\end{equation}
The order of $|\Ga_j^2|$ is bounded by a function $C(v,D)$.

We repeat this construction to each ball $B_{2r_j^1}(x_j^1)$ and all subsequent metric balls. In general, we have the bodies $\Bb_i^{k+1}=B_{2r_i^k}(x_j^k)\setminus \cup_j B_{r_j^{k+1}}(x_j^{k+1})$ and the necks $\Nn_j^{k+1}\subseteq B_{2r_j^{k}}(x_j^{k})\cap\Bb_i^{k}$ such that when $x\in \Bb_i^{k+1}$, then $r_h(x)\geq r_0\x \diam(\Bb_i^{k+1})$ and $\Nn_j^{k+1}\cong\R\z S^3/\Ga_j^{k+1}$, for some $\Ga_j^{k+1}\su O(4)$. The reason why this construction ends in finitely many steps is because if for some indices $j,k,l$, the intersection $\Nn_l^{k+1}\cap \Bb_j^k\neq \emptyset$, then $|\Ga_l^{k+1}|\leq|\Ga_j^k|-1$. Therefore, after at most $|\Ga_j^2|\leq C(v,D)$ many steps, there are no balls in the last bodies. In summary, we have the following decomposition theorem.

\begin{theorem}[\cite{cheeger2014regularity}, Theorem 8.64]\label{thm_f}
Let $M$ be a 4-dimensional Riemannian manifold with $|Ric|\leq 3$, $\vol(M)>v>0$ and $\diam(M)\leq D$. Then $M$ admits a decomposition into bodies and necks
$$M=\Bb^1 \cup \bigcup_{j_2=1}^{N_2} \Nn_{j_2}^2\cup \bigcup_{j_2=1}^{N_2} \Bb_{j_2}^2\cup\dots\cup \bigcup_{j_k=1}^{N_k} \Nn_{j_k}^k\cup \bigcup_{j_k=1}^{N_k} \Bb_{j_k}^k,$$

such that the following conditions are satisfied:
\begin{enumerate}
\item If $x\in \Bb_i^j$, then $r_h(x)\geq r_0(v,D)\cdot \diam (\Bb_i^j)$, where $r_h$ is the harmonic radius and $r_0$ is a constant that only depends on $v$ and $D$.
\item Each $\Nn_i^j$ is diffeomorphic to $\R\z S^3/\Ga_i^j$ for some $\Ga_i^j\su O(4)$.
\item $\Nn_i^j\cap \Bb_i^j$ is diffeomorphic to $\R\z S^3/\Ga_i^j$.
\item $\Nn_i^j\cap \Bb_{i'}^{j-1}$ is either empty or diffeomorphic to $\R\z S^3/\Ga_i^j$.
\item Each $N_i\leq N(v,D)$ and $k\leq k(v,D)$.
\end{enumerate}
\end{theorem}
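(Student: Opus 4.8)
The plan is to follow the iterative covering argument already sketched above, feeding it at each stage with two inputs from \cite{cheeger2014regularity}: an $\ep$-regularity statement that controls where the harmonic radius can degenerate, and the annulus structure theory that identifies the neck regions. Thus the ``proof'' is really an organized account of that construction, with the analytic work quoted as black boxes.

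First I would set up the base step. After rescaling so that one works at unit scale, cover $M$ by balls $\{B_1(x_i)\}$ with $\{B_{1/4}(x_i)\}$ pairwise disjoint; the Bishop--Gromov volume comparison, using the lower Ricci bound together with $\diam(M)\leq D$ and $\vol(M)>v$, bounds the number of such balls by $N_0(v,D)$. Inside each $B_1(x_i)$ one then invokes the $\ep$-regularity theorem \cite[Theorem 2.11]{cheeger2014regularity}: the set of points whose harmonic radius drops below a threshold $r_0(v,D)$ is covered by at most $N_1\leq N(v,D)$ balls $\{B_{r_j^1}(x_j^1)\}$ whose doubles $\{B_{2r_j^1}(x_j^1)\}$ are disjoint, and at this first stage the scales satisfy $r_j^1\geq r_0(v,D)$. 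Setting $\Bb^1=M\setminus\bigcup_j B_{r_j^1}(x_j^1)$ gives the first body, on which $r_h\geq r_0(v,D)$, and $\Nn_j^2=B_{2r_j^1}(x_j^1)\cap\Bb^1$ gives the first-generation necks, establishing item (1) at the top level.

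Next I would record the neck structure, which is where the hard analysis is cited. By \cite[Theorem 8.6 and Lemma 8.40]{cheeger2014regularity}, each $\Nn_j^2$ is diffeomorphic to an annulus $A_{r_j^1/2,\,2r_j^1}(0)$ in $\R^4/\Ga_j^2$ for a finite $\Ga_j^2\su O(4)$ with $|\Ga_j^2|\leq C(v,D)$, and in suitable coordinates the rescaled metric is $\ep(v)$-close to the flat cone metric in the $C^1$ sense of \eqref{equ2}; the topological consequences in items (2)--(4) are read off from this annular model. I would then run the recursion: repeat the whole procedure inside each $B_{2r_j^1}(x_j^1)$, rescaled to unit size, producing bodies $\Bb_i^{k+1}$ with $r_h\geq r_0\cdot\diam(\Bb_i^{k+1})$ and necks $\Nn_j^{k+1}\cong\R\z S^3/\Ga_j^{k+1}$, with per-stage bounds $N_i\leq N(v,D)$.

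The crux, and the step I expect to be the main obstacle to make rigorous without quoting \cite{cheeger2014regularity} essentially verbatim, is the termination argument: one must show that whenever a neck $\Nn_l^{k+1}$ meets a body $\Bb_j^k$ one generation up, the associated groups satisfy $|\Ga_l^{k+1}|\leq|\Ga_j^k|-1$, so the order strictly decreases along each branch of the tree. This monotonicity follows from the compatibility of the annular models across consecutive scales, again from \cite{cheeger2014regularity}. Granting it, the recursion stops after at most $|\Ga_j^2|\leq C(v,D)$ steps, so the tree has depth $k\leq k(v,D)$ and, combined with the uniform per-stage bounds, only finitely many bodies and necks, yielding the asserted decomposition.
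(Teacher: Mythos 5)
Your proposal matches the paper's treatment of this result essentially verbatim: the paper likewise presents Theorem~\ref{thm_f} as a quotation of \cite[Theorem 8.64]{cheeger2014regularity} and only sketches the same construction — the unit-scale cover with Bishop--Gromov counting, the $\ep$-regularity input locating where $r_h$ degenerates, the neck structure from Theorem~8.6 and Lemma~8.40 of \cite{cheeger2014regularity}, and termination via the strict decrease $|\Ga_l^{k+1}|\leq|\Ga_j^k|-1$. No discrepancy to report.
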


\begin{remark}
The diffeomorphism finiteness result requires some further argument based on the decomposition in Theorem~\ref{thm_f}. One may refer to the proof of Theorem~1.12 in \cite{cheeger2014regularity} for details. The above decomposition theorem is sufficient for us to obtain the estimations in Section~3.
\end{remark}

\begin{remark}\label{rm1}
From above construction, one can see that the manifold actually decomposes as $M=\Bb^1\cup (\cup B_{2r_{j_2}^2}(x_{j_2}^2))\cup\dots\cup(\cup B_{2r_{j_k}^k}(x_{j_k}^k))=\Bb^1\cup (\cup \Bb_{j_2}^2)\cup\dots\cup(\cup \Bb_{j_k}^k)$. The above decomposition is described as the ``bubble tree'' decomposition in \cite{cheeger2014regularity}. In fact, by Theorem~\ref{thm_f}(4), one can define a tree structure from this decomposition. It is convenient to introduce the following notations for our proof in Section~3.
\end{remark}

\begin{definition}\label{def1}
Suppose the manifold $M$ has a decomposition $M=\Bb^1\cup (\cup \Bb_{j_2}^2)\cup\dots\cup(\cup \Bb_{j_k}^k)$ as in Theorem~\ref{thm_f} and Remark~\ref{rm1}. We define a tree structure so that the vertices correspond to the bodies $\Bb^j_i$. If for some indices $j$, $i$ and $i'$, the intersection $\Bb^j_i\cap\Bb^{j+1}_{i'}$ is nonempty, we connect $\Bb^{j+1}_{i'}$ and $\Bb^j_i$ with an edge. We denote by $L^j_i$ the set of children of $\Bb^j_i$, i.e.,
$$L^j_i=\{\Bb^{j+1}_k|\Bb^{j+1}_k\cap \Bb^{j}_i\neq \emptyset\}$$ and let
 $$\Ll^j_i=\cup_{\Bb^{j+1}_k\in L^j_i} \Bb^{j+1}_k.$$
We denote by $\Tt^j_i$ the set of vertices of the subtree with root $\Bb^j_i$. In other words,
$$\Tt^j_i=\Bb^j_i\cup \Ll^j_i\cup (\cup_{\Bb^{j_1}_{i_1}\in L^j_i}(\Ll^{j_1}_{i_1}\cup(\cup_{\Bb^{j_2}_{i_2}\in L^{j_1}_{i_1}}\Ll^{j_2}_{i_2}\cup\dots))).$$
\end{definition}

Let us end this section by showing some elementary result about the geometry of a neck $\Nn_i^{j+1}$ in the above decomposition theorem.
\begin{definition}
Consider a diffeomorphism $F: A_{r_i^j/2,2r_i^j}(0)\ra \Nn_i^{j+1}$ such that equation~\eqref{equ2} is satisfied, where $A_{r_i^j/2,2r_i^j}(0)$ is an annulus centered at $0\in \R^4/\Ga_i^{j+1}$ for some finite discrete subgroup $\Ga_i^{j+1}\su O(4)$. The boundary of $\Nn_i^{j+1}$ consists of 2 connected components $S_1$ and $S_2$, which corresponds to the image of the inner and outer boundary of $A_{r_i^j/2,2r_i^j}(0)$ under the map $F$. We define the thickness of the neck $\Nn_i^{j+1}$ to be
$$\thick(\Nn_i^{j+1})=\inf_{x_1\in S_1, x_2 \in S_2} \dist(x_1,x_2),$$
where $\dist(x_1,x_2)$ is the distance between $x_1$ and $x_2$ in the neck $\Nn_i^{j+1}$.
\end{definition}
Then
\begin{lemma}\label{lm_0}
$$\diam(\Nn_i^{j+1}) \leq B(v)\cdot \thick(\Nn_i^{j+1}) ,$$
where $B(v)$ is a function that only depends on $v$, and $\diam(\Nn_i^{j+1})$ is the diameter of a neck $\Nn_i^{j+1}$ as a manifold with pullback metric.
\end{lemma}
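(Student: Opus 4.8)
The plan is to push the whole question onto the model annulus and compare the neck metric bilipschitzly with the flat one. Write $r=r_i^j$, $\Ga=\Ga_i^{j+1}$, and let $F\colon A_{r/2,2r}(0)\ra\Nn_i^{j+1}$ be the diffeomorphism fixed in the definition above, so that $g_{ij}=F^*g$ satisfies the almost-Euclidean bound \eqref{equ2}. Since $F$ is a diffeomorphism onto $\Nn_i^{j+1}$, it is an isometry from $\bigl(A_{r/2,2r}(0),g_{ij}\bigr)$ onto $\Nn_i^{j+1}$ with its induced length metric, and it sends the inner and outer boundary spheres $\{|x|=r/2\}$ and $\{|x|=2r\}$ to $S_1$ and $S_2$; so it suffices to bound $\diam$ and $\thick$ of $\bigl(A_{r/2,2r}(0),g_{ij}\bigr)$. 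From $\|g_{ij}-\de_{ij}\|_{C^0}\le\ep(v)$ I would first record that for some constant $c$ and every piecewise smooth curve $\ga$ in the annulus,
$$\sqrt{1-c\,\ep(v)}\cdot\length_{\de}(\ga)\ \le\ \length_{g_{ij}}(\ga)\ \le\ \sqrt{1+c\,\ep(v)}\cdot\length_{\de}(\ga),$$
where $\length_{\de}$ denotes Euclidean length on $\R^4/\Ga$ and, as is implicit in the use of \eqref{equ2}, $\ep(v)$ is small enough that $c\,\ep(v)<1$. Everything then reduces to estimating the flat annulus $A_{r/2,2r}(0)\su\R^4/\Ga$.

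For the lower bound on thickness I would use the radial function $\rho(x)=|x|$: it descends to $\R^4/\Ga$ because $\Ga\su O(4)$, and it is $1$-Lipschitz for the flat metric, so any curve joining $S_1$ to $S_2$ has $\rho$ ranging over all of $[r/2,2r]$ and therefore Euclidean length at least $\tfrac{3}{2}r$; hence $\thick(\Nn_i^{j+1})\ge\sqrt{1-c\,\ep(v)}\cdot\tfrac{3}{2}r$. For the upper bound on diameter I would, given $x,y$ in the annulus, fix $\rho_0=2r-\eta$ with $\eta>0$ small, go from $x$ radially to $\rho_0\,x/|x|$ (a segment of Euclidean length $\le\tfrac{3}{2}r$ that stays in the annulus), then along a minimizing curve inside the sphere $\{\rho=\rho_0\}$, which with its round metric is $S^3_{\rho_0}/\Ga$ of diameter at most $\diam(S^3_{\rho_0})=\pi\rho_0\le 2\pi r$, and finally radially to $y$; the resulting curve lies in the annulus and has Euclidean length at most $(3+2\pi)r$, so $\diam(\Nn_i^{j+1})\le\sqrt{1+c\,\ep(v)}\,(3+2\pi)r$. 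Dividing the two estimates gives the lemma with
$$B(v)=\frac{2(3+2\pi)}{3}\sqrt{\frac{1+c\,\ep(v)}{1-c\,\ep(v)}},$$
a quantity depending on $v$ alone.

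The only place that needs care is the bookkeeping for the quotient by $\Ga$: I would check that $\rho$ is well defined on $\R^4/\Ga$, that a curve in $\Nn_i^{j+1}$ pulls back under $F^{-1}$ to a curve in the quotient annulus to which the bilipschitz comparison applies, and that distances in $\R^4/\Ga$ are at most the corresponding Euclidean distances in $\R^4$ (used for the detour curve above) -- all three are immediate since $\Ga$ acts by isometries. Beyond that I anticipate no genuine obstacle; the statement amounts to saying that a Cheeger--Naber neck is bilipschitz to a fixed flat annulus with constants controlled by $\ep(v)$, and for that flat annulus both ``thickness bounded below'' and ``diameter bounded above'' are elementary.
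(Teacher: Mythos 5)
Your proof is correct and follows essentially the same route as the paper's: pull back to the flat annulus $A_{r/2,2r}(0)\subset\R^4/\Ga$ via the bilipschitz comparison from \eqref{equ2}, bound the Euclidean diameter by a radial-plus-spherical path using $\diam(S^3/\Ga)\le\pi$, and bound the thickness below by the $3r/2$ radial gap. The only differences are cosmetic (you keep $3+2\pi$ exact where the paper rounds to $10$, and you spell out that $\rho=|x|$ is $1$-Lipschitz and $\Ga$-invariant, a point the paper leaves implicit); the structure and constants are the same up to normalization of $\ep(v)$.
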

\begin{proof}
Let $F: A_{r_i^j/2,2r_i^j}(0)\ra \Nn_i^{j+1}$ be a diffeomorphism, where $A_{r_i^j/2,2r_i^j}(0)$ is an annulus centered at $0\in \R^4/\Ga_i^2$ for some finite discrete subgroup $\Ga_i^{j+1} \su O(4)$, such that if $g_{ij}=F^*g$ is the pullback metric, $||g_{ij}-\de_{ij}||_{C^0}+r_j^1|| \dl g_{ij}||_{C^0}\leq \ep(v)$. Because $\diam(S^3/\Ga_i^{j+1}) \leq \pi$, we have
\[\diam(A_{r_i^j/2,2r_i^j}(0)) \leq 2\pi r_i^j+2(2r_i^j-r_i^j/2) \leq 10 r_i^j.\]
Now $||g_{ij}-\de_{ij}||_{C^0} \leq \ep(v)$  implies that
\[\thick(\Nn_i^{j+1}) \geq \frac{2r_i^j-r_i^j/2}{\sqrt{1+2 \ep(v)}}=\frac{3r_i^j}{2\sqrt{1+2 \ep(v)}}\]
and
\[\diam(A_{r_i^j/2,2r_i^j}(0)) \geq \frac{\diam(\Nn_i^{j+1})}{\sqrt{1-\ep(v)}}.\]
Therefore, we obtain that
\[\diam(\Nn_i^{j+1}) \leq B(v)\cdot \thick(\Nn_i^{j+1}) \]
where $B(v)=\frac{20}{3}\sqrt{1-\ep(v)}\sqrt{1+2\ep(v)}$.
\end{proof}

\subsection{Nerve of a covering and geodesic graph}
Let $M\in\Mm_1(4,D,v)$. In this section, we are going to introduce the nerve of a harmonic covering of $M$ as well as a geodesic graph associated to this covering.

Let $\Bb_i^j$ be a body in the bubble tree decomposition of $M$ as in Theorem~\ref{thm_f}. The harmonic radius of a body is defined to be the infimum over harmonic radii of all points in the body. If ${r_h}_i^j$ is the harmonic radius of body $\Bb_i^j$, let
$$R_i^j={r_h}_i^j/20.$$

By Theorem~\ref{thm_f} and Remark~\ref{rm1}, one can cover each body $\Bb_i^j$ uniformly by no more than $ {(\frac{constant}{r_0(v,D)})}^4$ balls of radius $R_i^j$. We denote $\mathcal{O}(B_i^j)$ the covering of $\Bb_i^j$ by those (open) balls of radius $R_i^j$. We denote $\mathcal{O}(\Tt_i^j)=\cup_{i,j}\mathcal{O}(B_i^j)$, where the union is over all bodies in $\Tt_i^j$. Let $\mathcal{O}=\cup_{i,j}\mathcal{O}(\Tt_i^j)$. Let $\ti{N}$ be the number of balls in $\mathcal{O}$, then
$$\ti{N} \leq \left(\frac{constant}{r_0(v,D)}\right)^4 \cdot N(v,D) \cdot k(v,D).$$
We enumerate the centers of those balls as $\{x_1, \cdots, x_{\ti{N}}\}$. For simplicity, if we do not want to emphasize which body $x_i$ belongs to, we denote the harmonic ball centered at $x_i$ with radius $R_i^j$ for some $j$ by $B_{R_i}(x_i)$.

Let us consider the nerve $\Nc$ of the covering $\mathcal{O}$. Let the continuous map $f:M \rightarrow \Nc$ be the natural map associated to the nerve. (The reader may refer to \cite[Section 4.3]{coornaert2015topological} for the definition of the nerve and the natural map). $\Nc$ is a simplicial complex with dimension bounded by $\ti{N}$. Let $\Nc^{(k)}$ be the $k$-skeleton of $\Nc$. Note that since each $\Tt_i^j$ is covered by $\mathcal{O}(\Tt_i^j)$, let $\Nc(\Tt_i^j)$ be the nerve of $\Tt_i^j$ corresponding to this subcover. Then $\Nc(\Tt_i^j)$ is a simplicial subcomplex of $\Nc$. The natural map from $\mathcal{O}(\Tt_i^j)$ to $\Nc(\Tt_i^j)$ is defined by the restriction of $f$ to the image of $\mathcal{O}(\Tt_i^j)$ under the inclusion map.

Next, we define a geodesic graph $\Ga$ associated to the covering $\mathcal{O}$ of $M$. The vertices of $\Ga$ are the centers $\{x_1, \cdots, x_{\ti{N}}\}$ of the balls in the covering $\mathcal{O}$. If the intersection of two balls $B_{R_i}(x_i)\cap B_{R_j}(x_j) \neq \emptyset$, we connect the centers $x_i$, $x_j$ by a minimizing geodesic $E_{ij}$, and we define it to be an edge connecting $x_i$ and $x_j$ in $\Ga$. Hence, the total number of edges in $\Ga$ is bounded by $\ti{N}^2$.

Note that defined in this way, we may view $\Ga$ as a subset of $M$ using the inclusion map. And we may restrict the natural map $f:\Ga\ra \Nc$ to the geodesic graph $\Ga$.

\begin{definition}
A 1-chain $\alpha$ in $\Ga$ is a simplicial map $\alpha : [0,1]_{\Delta} \ra \Ga$, where $[0,1]_{\Delta}$ is a simplicial complex obtained by taking a partition $0 < t_1 < \cdots < t_L < 1$ of the interval $[0,1]$ and $L \geq 0$ is an integer. We define the simplicial length $m(\alpha)$ of $\alpha$ to be the number of geodesic segments in $\alpha$ counting with multiplicity. In other words, $m(\alpha) = L + 1$.
\end{definition}

Here, we describe a method to estimate the 2-mass of the filling of a 1-cycle in $\Ga$ by using the nerve $\Nc$ of the covering $\mathcal{O}$. Suppose $C$ is a 1-cycle in $\Ga$ with simplicial length $L$. We also assume that $C$ bounds a 2-chain in $M$. Then $f(C)$ is a singular 1-cycle and bounds a singular 2-chain in $\Nc$.

Suppose that $x_i$ is a vertex that is contained in the image of $C$. Then $f(x_i) \in \Nc^{(0)}$, but $f(C)$ may not be in $\Nc^{(1)}$. In fact, if $E_{ij} \subset C$ is a minimizing geodesic connecting $x_i$ and $x_j$, then $B_{R_i}(x_i)\cap B_{R_j}(x_j) \neq \emptyset$. The triangle inequality implies that $E_{ij} \subset B_{R_i}(x_i)\cup B_{R_j}(x_j)$. Hence, the image of $f(E_{ij})$ is a curve in $\Nc$ connecting $f(x_i)$ and $f(x_j)$, and $f(E_{ij})$ is contained in a simplex in $\Nc$ with vertices $f(x_i)$ and $f(x_j)$. Therefore $f(C)$ is homologous to a simplicial 1-cycle $C'$ in $\Nc$. This implies $C'$ bounds a singular 2-chain in $\Nc$ and hence it bounds a simplicial 2-chain in $\Nc$. Moreover, $C'$ has simplicial length $L$ and has the same vertices as $f(C)$.

An important observation in \cite{nabutovsky2003upper} is that if a simplicial $k$-cycle $c_k$ in an $n$-dimensional simpicial complex with $n_0$ vertices bounds some simplical $(k+1)$-chain, then it bounds a simplicial $(k+1)$-chain of which the size can be estimated using $c_k$, $n_0$ and $n$. More precisely,

\begin{lemma}[\cite{nabutovsky2003upper}, Corollary4.2] Let $C_k=\sum_i a_k^i \si_i^k $ be a simplicial $k$-cycle in an $n$-dimensional simplicial complex with $n_0$ vertices,  where $a_k^i\in \Z$ and each $\si_i^k$ being a $k$-simplex. Suppose that $C_k$ bounds a $(k+1)$-chain. Then $C_k$ bounds a simplicial $(k+1)$-chain $C_{k+1}=\sum_i a_{k+1}^i\si_i^{k+1}$ with $\sum_i |a_{k+1}^i|\leq n_0^{4n\cdot n_0^n}\max_i|a^i_k|$.
\end{lemma}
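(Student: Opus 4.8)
The plan is to convert this into a statement about integer linear systems and to solve that system by hand. Fix enumerations $\si_1^k,\dots,\si_p^k$ of the $k$-simplices and $\tau_1^{k+1},\dots,\tau_q^{k+1}$ of the $(k+1)$-simplices of the complex, so that $p=\binom{n_0}{k+1}$ and $q=\binom{n_0}{k+2}$. In these bases the boundary map $\dl_{k+1}$ is represented by an integer $p\times q$ matrix $A$ all of whose entries lie in $\{-1,0,1\}$ (a $(k+1)$-simplex has $k+2$ faces, each entering with incidence coefficient $\pm 1$). Writing $C_k$ as a column vector $b\in\Z^p$, the hypothesis that $C_k$ bounds a simplicial $(k+1)$-chain says exactly that $b$ lies in the subgroup of $\Z^p$ generated by the columns of $A$, and we must exhibit $z\in\Z^q$ with $Az=b$ and $\sum_i|z_i|$ small. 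We may assume $b\neq 0$, i.e.\ $\max_i|a_k^i|\geq 1$, since otherwise $z=0$ works and $C_{k+1}=0$.

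The heart of the matter is the following elementary effective-solvability fact, which I would prove from scratch. Let $r=\operatorname{rank}A$ and let $\Delta$ be the largest absolute value of the determinant of a square submatrix of $A$. Choose rows $R$ and columns $P$ with $|R|=|P|=r$ and $A_{R,P}$ invertible; since $A_{R,\cdot}$ then already has rank $r$, every row of $A$ is a rational combination of the rows in $R$. For each column index $j\notin P$ the adjugate of $A_{R,P}$ produces an integer vector $w^{(j)}\in\Z^q$ supported on $\{j\}\cup P$, with $w^{(j)}_j=\det A_{R,P}$ and $w^{(j)}_P=-\operatorname{adj}(A_{R,P})\,A_{R,j}$; a direct computation gives $A_{R,\cdot}w^{(j)}=0$, hence $Aw^{(j)}=0$, so $w^{(j)}\in\ker_\Z A$, and all entries of $w^{(j)}$ have absolute value at most $r\Delta$ while $|w^{(j)}_j|=|\det A_{R,P}|\le\Delta$. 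These $q-r$ vectors are linearly independent (the $j$-th coordinate pins each down) and hence span $\ker_\Q A$. Now take any integer solution of $Az=b$ and, for each $j\notin P$, subtract the multiple $t_jw^{(j)}$ with $t_j$ the nearest integer to $z_j/\det A_{R,P}$; because $w^{(j)}$ is supported on $\{j\}\cup P$ this makes $|z_j|<|\det A_{R,P}|\le\Delta$ without disturbing the other non-pivot coordinates. The result is still an integer solution, and its pivot block is now forced: $A_{R,P}z_P=b_R-A_{R,\,[q]\setminus P}\,z_{[q]\setminus P}$, whose right-hand side has entries of absolute value at most $\|b\|_1+q\Delta$, so Cramer's rule bounds each pivot coordinate by $r\Delta(\|b\|_1+q\Delta)$. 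Summing the pivot and non-pivot parts gives $\sum_i|z_i|\le m^2\Delta\,\|b\|_1+2m^3\Delta^2$, where $m=\max(p,q)$.

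Finally I would substitute the combinatorial estimates. By Hadamard's inequality any $d\times d$ matrix with entries in $\{-1,0,1\}$ has determinant of absolute value at most $d^{d/2}$, so $\Delta\le m^{m/2}$; and $\|b\|_1=\sum_i|a_k^i|\le p\max_i|a_k^i|\le m\max_i|a_k^i|$. Feeding these into $\sum_i|z_i|\le m^2\Delta\,\|b\|_1+2m^3\Delta^2$, using $\max_i|a_k^i|\ge 1$ to absorb the additive term, and bounding the number $m$ of simplices in terms of $n_0$ and $n$, one lands at $\sum_i|a_{k+1}^i|\le n_0^{4n\cdot n_0^n}\max_i|a_k^i|$. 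The one step that genuinely requires care — and the place where essentially all the remaining work sits — is this last numerical bookkeeping: one must count the simplices sharply enough and keep track of which power of $\Delta$ appears so that the polynomial-in-$m$ prefactors together with the Hadamard exponent all fit under $n_0^{4n\cdot n_0^n}$. Conceptually there is no obstacle; everything before the bookkeeping is routine integer linear algebra, and a bound of the shape $n_0^{Cn\cdot n_0^n}$ with a slightly larger absolute constant $C$ would follow with room to spare.
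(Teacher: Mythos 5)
This lemma is cited in the paper from Nabutovsky--Rotman's earlier work and is not reproved here, so there is no in-paper proof to compare against; I will assess your argument on its own terms.

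Your overall strategy --- encode $\partial_{k+1}$ as a $\{-1,0,1\}$-matrix $A$, build small integer kernel vectors $w^{(j)}$ from the adjugate of a pivot block $A_{R,P}$, reduce the non-pivot coordinates of an arbitrary integer solution modulo $\det A_{R,P}$, and then bound the forced pivot coordinates by Cramer's rule --- is a clean and correct piece of effective integer linear algebra, and the intermediate inequality $\sum_i |z_i| \le m^2\Delta\|b\|_1 + 2m^3\Delta^2$ is right. (A minor slip: $p$ and $q$ should be upper-bounded by, not equal to, $\binom{n_0}{k+1}$ and $\binom{n_0}{k+2}$, since not every subset of vertices need span a simplex; this doesn't affect anything.)

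The genuine gap is in the ``bookkeeping'' step you wave past, and your reassurance that it works ``with room to spare'' is not correct. You bound $\Delta \le m^{m/2}$ with $m = \max(p,q) \le n_0^{\,n+1}$, which gives $\Delta^2 \le m^m \le n_0^{(n+1)n_0^{\,n+1}}$. The exponent here is of order $n_0^{\,n+1} = n_0\cdot n_0^{\,n}$, whereas the target exponent is $4n\,n_0^{\,n}$; these differ by a factor of roughly $n_0/n$, which is unbounded as $n_0 \to \infty$. In particular, \emph{no} choice of absolute constant $C$ lets your written estimate produce a bound of the shape $n_0^{Cn\cdot n_0^{\,n}}$, so the claim that such a bound ``would follow with room to spare'' is false as it stands. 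The fix is available inside your own framework but must actually be used: any square submatrix of the $p\times q$ matrix $A$ has size at most $\min(p,q)\le p$, and $p$ counts $k$-simplices with $k\le n-1$, so $p\le\binom{n_0}{k+1}\le n_0^{\,n}$. Applying Hadamard to a $d\times d$ block with $d\le p$ gives $\Delta\le p^{p/2}\le n_0^{\,n\,n_0^{\,n}/2}$, and then $3m^3\Delta^2\le 3\,n_0^{3(n+1)}\cdot n_0^{\,n\,n_0^{\,n}}$ does land under $n_0^{4n\cdot n_0^{\,n}}$ (one can sharpen further by noting each column of a boundary matrix has exactly $k+2$ nonzero entries, giving $\Delta\le (k+2)^{p/2}$). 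So the approach is salvageable, but the Hadamard bound must be taken over the correct dimension; as written, the final step does not go through.
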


Therefore, by the above discussion of the nerve $\Nc$, we conclude that
\begin{corollary}\label{cor1}
Let $C_1=\sum_i a_1^i \si_i^1$ be a simplcial  1-cycle in $\Nc$, where $a_1^i\in \Z$ and $\si_i^1:S^1\ra M$ a Lipschitz map. Then $C_1$ bounds a 2-chain $C_2=\sum_i a_{2}^i\si_i^{2}$ in $\Nc$ with $\sum_i |a_{2}^i| \leq \ti{N}^{ 4\ti{N}^{\ti{N}+1} }\cdot \max_i|a^i_1|$.
\end{corollary}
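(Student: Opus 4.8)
The plan is to obtain Corollary~\ref{cor1} as a direct instance of the preceding Lemma (\cite{nabutovsky2003upper}, Corollary~4.2), by feeding in the two combinatorial parameters of the nerve $\Nc$: its number of vertices and an upper bound for its dimension. Both were already recorded in the discussion above. Since $\Nc$ is the nerve of the covering $\mathcal O$, which consists of exactly $\ti N$ balls, the complex $\Nc$ has $n_0=\ti N$ vertices. Moreover every simplex of $\Nc$ is spanned by a subcollection of these $\ti N$ balls having nonempty common intersection, so $\dim\Nc\leq \ti N$; thus I may take $n=\ti N$ in the Lemma, and since the bound $n_0^{\,4n\cdot n_0^{\,n}}$ is monotone nondecreasing in $n$ (for $n_0\geq 2$), using this upper bound is harmless.

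Before invoking the Lemma one must check its hypothesis, namely that the simplicial $1$-cycle $C_1$ bounds \emph{some} $2$-chain in $\Nc$. In the way Corollary~\ref{cor1} is used, $C_1$ arises, through the natural map $f$ and the ``homologous to a simplicial cycle'' correction carried out in the paragraphs above, from a $1$-cycle in $\Ga$ that bounds a $2$-chain in $M$; pushing that filling forward by $f$ and adding the single-simplex correction $2$-chains exhibits $C_1$ as a boundary in $\Nc$. (Equivalently, since each harmonic ball $B_{R_i}(x_i)$ with $R_i=({r_h})_i^j/20$ is well inside the harmonic radius and therefore diffeomorphic, with metric $C^0$-close to Euclidean, to a round ball, the balls of $\mathcal O$ are contractible, so the nerve theorem gives $\Nc\simeq M$ and hence $H_1(\Nc;\Z)\cong H_1(M;\Z)=0$, whence every simplicial $1$-cycle in $\Nc$ bounds.) In either reading the hypothesis of the Lemma holds.

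It then remains to apply the Lemma with $k=1$, $n_0=\ti N$, $n=\ti N$: the $1$-cycle $C_1=\sum_i a_1^i\si_i^1$ bounds a simplicial $2$-chain $C_2=\sum_i a_2^i\si_i^2$ in $\Nc$ with
$$\sum_i|a_2^i|\ \leq\ n_0^{\,4n\cdot n_0^{\,n}}\max_i|a_1^i|\ =\ \ti N^{\,4\ti N\cdot \ti N^{\ti N}}\max_i|a_1^i|\ =\ \ti N^{\,4\ti N^{\ti N+1}}\max_i|a_1^i|,$$
which is precisely the asserted estimate (the exponents agree since $4\ti N\cdot\ti N^{\ti N}=4\ti N^{\ti N+1}$).

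I do not expect a genuine obstacle: the statement is essentially a bookkeeping corollary of the cited result, and the only steps needing a line of justification are the dimension bound $\dim\Nc\le\ti N$ (immediate from the vertex count) and the elementary exponent identity above. The one place where actual content enters — if one wants the corollary in the full generality ``every simplicial $1$-cycle of $\Nc$ bounds'' rather than just for the cycles produced by the natural map — is the appeal to the nerve theorem, which rests on contractibility of the balls $B_{R_i}(x_i)$ (and of their nonempty intersections); this is why I would prefer, in the write-up, to carry the ``$C_1$ is a boundary because it comes from a boundary in $M$'' formulation through from the geodesic graph.
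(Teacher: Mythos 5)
Your proposal is correct and follows essentially the same route as the paper: the corollary is obtained by applying the cited result of Nabutovsky--Rotman with $k=1$, $n_0=\ti{N}$ vertices and dimension bound $n=\ti{N}$, and the exponent identity $4\ti{N}\cdot\ti{N}^{\ti{N}}=4\ti{N}^{\ti{N}+1}$ gives the stated bound. Your additional care in verifying the hypothesis that $C_1$ bounds some $2$-chain in $\Nc$ (which the paper handles implicitly in the surrounding discussion, via $H_1(M)=0$ and the pushforward of a filling from $M$) is a welcome clarification but not a departure from the paper's argument.
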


Above Corollary~\ref{cor1} implies that $C'$ we constructed before bounds a simplicial 2-chain $C'_2$ in $\Nc$ with no more than $L\cdot\ti{N}^{ 4\ti{N}^{\ti{N}+1}}$ 2-simplices.

Our next step is to find a corresponding 2-chain in $M$ with boundary $C$. In fact, let us begin a map from the 1-skeleton $\Nc^{(1)}$ to $M$ in the following way. If $f(x_i)$ is a 0-simplex in $C'_2$, then we map it to $x_i$. A 1-simplex in $C'_2$ connecting $f(x_i)$  and $f(x_j)$ is mapped to the minimizing geodesic in $\Ga$ connecting $x_i$ and $x_j$. Therefore, the boundary of a 2-simplex in $C'_2$ is mapped to geodesic triangles in $\Ga$. The number of geodesic triangles in the image is still bounded by $L\cdot \ti{N}^{ 4\ti{N}^{\ti{N}+1}}$.

Now, if we can fill each geodesic triangle by a 2-chain with controlled 2-mass, then we may obtain a filling of $C$ with bounded 2-mass. Note that a geodesic triangle in $\Ga$ is contained in a harmonic ball. Indeed, by the triangle inequality, if $x_i$, $x_j$ and $x_k$ are vertices of a geodesic triangle, assume that the harmonic radius at $x_i$ is largest among the three points, then the geodesic triangle is contained in $B_{R_i}(x_i)\cup B_{R_j}(x_j) \cup B_{R_k}(x_k) \subset B_{r_h/2}(x_i)$. We show that if the image of a 1-cycle is contained in a harmonic ball, then it bounds a ``small'' 2-chain inside the ball.

\begin{lemma}\label{lm1}
Suppose $x\in M$ is a point in $M$. Let $r_h(x)$ be the harmonic radius at $x$. Then any singular Lipschitz 1-cycle $C\in \Zz_1(M,\Z)$ whose image of $C$ is contained in the metric ball $B_{r_h(x)/2}(x)$ with $\mass_1 (C)$ bounds a singular 2-chain with 2-mass bounded by $\mass_1 (C) \cdot r_h(x)$.
\end{lemma}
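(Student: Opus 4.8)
The plan is to transport the problem into the harmonic coordinate chart centered at $x$, produce a filling there by coning to the center of the Euclidean model, and then control every distortion using the $C^0$-bound on the metric in Definition~\ref{def2_h}(3). Let $\Phi\colon B_{r_h(x)}(0^n)\to M$ be the harmonic coordinate, so $\Phi(0^n)=x$. By Lemma~\ref{lem1_c} we have $B_{r_h(x)/2}(x)\subset \Phi\big(B_{r_h(x)}(0^n)\big)$, so $\Phi^{-1}$ is defined on the image of $C$; since $\Phi$ is a $C^1$-diffeomorphism with derivative bounded above and below (again by Definition~\ref{def2_h}(3)), the push-forward $\widetilde C:=\sum_i a_i\,(\Phi^{-1}\circ f_i)$ is a Lipschitz singular $1$-cycle in $B_{r_h(x)}(0^n)$. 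Moreover, for any $y$ in the image of $C$ one can join $x$ to $y$ by a path of length at most $r_h(x)/2$ which then stays in $B_{r_h(x)/2}(x)\subset \Phi\big(B_{r_h(x)}(0^n)\big)$, so applying the length comparison of Lemma~\ref{lem1_c} to the image of this path under $\Phi^{-1}$ gives $|\Phi^{-1}(y)|\le \tfrac12\sqrt{1+2\cdot 10^{-3}}\,r_h(x)=:\rho<r_h(x)$. Thus $\widetilde C$ has image in the Euclidean ball $B_\rho(0^n)$.

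Next I would cone $\widetilde C$ to the origin. For each singular $1$-simplex $\widetilde f_i\colon\De^1\to B_\rho(0^n)$ let $G_i\colon\De^2\to B_\rho(0^n)$ be the $2$-simplex obtained by sending one vertex of $\De^2$ to $0^n$, the opposite edge to $\widetilde f_i$, and interpolating radially; this map is Lipschitz up to the apex because the radial factor vanishes there while $\widetilde f_i$ is bounded and Lipschitz, and its image stays in the convex set $B_\rho(0^n)$. Put $\widetilde D:=\sum_i a_i\,G_i$. The standard cone identity $\dl\circ(0^n*\,\cdot\,)+(0^n*\,\cdot\,)\circ\dl=\mathrm{id}$ on chains of positive degree, together with $\dl\widetilde C=0$, gives $\dl\widetilde D=\widetilde C$; here the boundary pieces of the $G_i$ lying along the two ``radial'' edges cancel after summing with the coefficients $a_i$, because $\widetilde C$ being a cycle means its net multiplicity at every endpoint is $0$. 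A direct computation in $\R^n$ — parametrizing the cone over a curve $c(t)$ as $(s,t)\mapsto s\,c(t)$ with $s\in[0,1]$, whose area element is $s\,|c(t)\wedge c'(t)|\le s\,|c(t)|\,|c'(t)|$ — gives $\int_{\De^2}G_i^*(d\vol_{\de})\le \tfrac12\,\rho\,\length_\de(\widetilde f_i)$, and hence
\[
\sum_i|a_i|\int_{\De^2}G_i^*(d\vol_\de)\ \le\ \frac{\rho}{2}\sum_i|a_i|\,\length_\de(\widetilde f_i).
\]

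Finally I would push the filling back to $M$, setting $D:=\Phi\circ\widetilde D$, which is a Lipschitz singular $2$-chain with $\dl D=\Phi\circ\widetilde C=C$. The bound $\|g_{ij}-\de_{ij}\|_{C^0}\le 10^{-3}$ on $\Phi^*g$ shows, exactly as in the proof of Lemma~\ref{lem1_c}, that $\Phi$ distorts lengths by a factor in $[\sqrt{1-10^{-3}},\sqrt{1+2\cdot 10^{-3}}]$ and $2$-dimensional areas by a factor at most $1+2\cdot 10^{-3}$. Therefore $\mass_2(D)\le (1+2\cdot10^{-3})\sum_i|a_i|\int_{\De^2}G_i^*(d\vol_\de)$, while $\sum_i|a_i|\,\length_\de(\widetilde f_i)\le \sqrt{1+2\cdot 10^{-3}}\,\mass_1(C)$; combining these with the displayed inequality and $\rho\le \tfrac12\sqrt{1+2\cdot10^{-3}}\,r_h(x)$ yields
\[
\mass_2(D)\ \le\ \frac14\,(1+2\cdot10^{-3})^{2}\,r_h(x)\,\mass_1(C)\ <\ r_h(x)\,\mass_1(C),
\]
which is the asserted bound, with a constant well below $1$. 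The only genuinely non-routine points are the verification that the radial cone is an honest Lipschitz chain near its apex and the boundary bookkeeping $\dl\widetilde D=\widetilde C$ (an augmentation/telescoping argument using $\dl\widetilde C=0$); the distortion estimates and the Euclidean cone-area computation are elementary, and the numerical slack is generous — indeed even the crude bound $|\Phi^{-1}(y)|<r_h(x)$ in place of $\rho$ already suffices.
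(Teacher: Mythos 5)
Your proposal is correct and follows essentially the same route as the paper: pull the cycle back through the harmonic coordinate chart, cone it to the origin of the Euclidean model, bound the cone's area by $\tfrac12(\text{radius})\times(\text{length})$, and push the filling back to $M$ using the $C^0$ metric distortion bounds. The only differences are cosmetic — the paper cones each closed loop $f_i$ separately (so no boundary bookkeeping is needed) and uses the cruder radius bound $r_h(x)$ in place of your $\rho$, but both arguments land comfortably within the stated constant.
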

\begin{proof} Let $\F:B_{r_h(x)}(0)\ra M$ be the harmonic coordinate at the point $x\in M$, where $\F(0)=x$. Suppose that $C=\sum_i a_if_i$, where $a_i\in \Z$ and $f_i:S^1\ra M$ being Lipschitz maps. Let $l_i=\mass_1(f_i)$. We show that $f_i$ bounds a 2-chain in $B_{r_h(x)/2}(x)$ with mass bounded by $l_i\cdot r_h(x)$.

We use the same notation $f_i$ to denote the image of the 1-cycle $f_i$ in the metric ball $B_{r_h(x)/2}(x)$. By Lemma~\ref{lem1_c}, $\F^{-1}(f_i)$ is a piecewise differentiable curve in $B_{r_h(x)}(0)\su \R^4$ with length bounded by $\sqrt{1+2\cdot 10^{-3}}\cdot l$. Let us fill $\F^{-1}(f_i)$ by a singular 2-chain $B_i$ in $B_{r_h(x)}(0)$, which is obtained by connecting the points on $\F^{-1}(f_i)$ with the origin in $\R^4$. This 2-chain is a generalized cone with boundary $\F^{-1}(f_i)$. The area of this cone is bounded by $\frac{1}{2}\x r_h(x)\x \length(\F^{-1}(f_i))\leq \frac{\sqrt{1+2\cdot 10^{-3}}}{2}\x l\x r_h(x)$. Then $\F(B_i)$ defines a 2-chain in $B_{r_h(x)}(x)$ with boundary $f_i$. By Lemma~\ref{lem1_c},  $\mass_2(\F(B_i))\leq \frac{\sqrt{1+2\x 10^{-3}}}{2\x(1-10^{-3})} \x l\x r_h(x)\leq l\x r_h(x).$
\end{proof}

\section{Proof of Theorem~\ref{thm1}}
Throughout this section, we consider the manifold $M\in \Mm_1(4,D,v)$. Recall that the idea of the proof of this theorem is the following. We begin with a singular Lipschitz 1-cycle $C\in \Zz_1(M;\Z)$. Our first step is to find a ``nice'' representative of the cycle whose image is contained in some body regions. In other words, we are going to show that the cycle $C$ can be written as a sum $\sum C_k$, where the image of each $C_k$ is contained in some body $\Bb_i^j$. Then for each $C_k$, we are going to show that it either bounds a 2-chain in $\Bb_i^j$ with controlled mass, or it is homologous to a 1-cycle $C_k'$ of which the image is contained in the neck $\Nn_{i}^{j+1}$. We will show that in the later case the cycle $C_k-C_k'$ bounds a 2-chain in $\Bb_i^j$ and the cycle $C_k'$ bounds a 2-chain in $M$. The mass of both these 2-chains can be estimated using the results in Section~2.2. Hence the 1-cycle $C$ bounds a 2-chain with controlled mass.

First we prove that every cycle can be represented by maps with image contained in a single body.

\begin{lemma}\label{lm2}
For any singular Lipschitz 1-cycle $C\in \Zz_1(M,\Z)$, there is a decomposition $C=\sum_{k=1}^{n(v,D)} C_k$ such that
\begin{enumerate}
\item Each $C_k$ is a singular Lipschitz 1-cycle with image contained in a body $\Bb^j_i$ for some $\Bb^j_i$ in Theorem~\ref{thm_f}.
\item The total number of the cycles $n(v,D) \leq N(v,D)\cdot k(v,D)$, where $N(v,D)$ and $k(v,D)$ are the functions in Theorem~\ref{thm_f}.
\item $\sum_{k=1}^{n} \mass_1(C_k)\leq (2B(v)+1)^{k(v,D)}\cdot \mass_1(C)$, where $B(v)$ is the function in Lemma~\ref{lm_0}.
\end{enumerate}
\end{lemma}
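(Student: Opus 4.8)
The plan is to cut $C$ at a $3$-sphere placed in the middle of every neck, regard the resulting arcs as living in genuine bodies, close them back up into cycles using short paths on the cutting spheres, and keep track of everything by a bottom-up traversal of the bubble tree of Definition~\ref{def1}. \emph{Set-up.} For each neck $\Nn^j_i$ take the diffeomorphism $F\colon A_{r/2,2r}(0)\to\Nn^j_i$ of Theorem~\ref{thm_f} and the radial coordinate $\rho$ it carries; for $\rho_0$ in the middle third of $(r/2,2r)$ the hypersurface $\Si^j_i:=F(\{\rho=\rho_0\})\cong S^3/\Ga^j_i$ separates $M$ into the region $\Tt^j_i$ (slightly trimmed) and its complement; write $\Si=\bigcup_{j,i}\Si^j_i$. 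Two facts, read off from the Cheeger--Naber construction, will be used throughout. (a) Each neck $\Nn^j_i$ meets both bodies $\Bb^j_i$ and $\Bb^{j-1}_{i'}$ that it joins in a collar whose $\rho$-extent contains the middle third of the neck (Theorem~\ref{thm_f}(3),(4), together with the fact that the singular balls removed to form a body are disjoint from the necks adjacent to it, since those necks have harmonic radius comparable to their size while the balls are centred where it is tiny); consequently $\Si^j_i\subset\Bb^j_i$, each $\Si^{j+1}_k$ with $k\in L^j_i$ lies in $\Bb^j_i$, and the component $U^j_i$ of $M\setminus\Si$ containing the core of $\Bb^j_i$ satisfies $U^j_i\subset\Bb^j_i$. (b) For the same reason, the necks $\Nn^j_i$ are pairwise disjoint. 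Finally, since $\rho\circ f_l$ is Lipschitz on $f_l^{-1}(\Nn^j_i)$ for each constituent loop $f_l$ of $C$, a one-dimensional coarea (Banach indicatrix) argument lets us choose each $\rho_0$ so that $C$ meets $\Si^j_i$ in finitely many points and, in addition, $\#(C\cap\Si^j_i)\cdot\diam(\Si^j_i)\le 2B(v)\,\mass_1(C\cap\Nn^j_i)$ --- combining the indicatrix bound $\#(C\cap\Si^j_i)\le c(v)\,\mass_1(C\cap\Nn^j_i)/\thick(\Nn^j_i)$ with $\diam(\Si^j_i)\le\diam(\Nn^j_i)\le B(v)\,\thick(\Nn^j_i)$ from Lemma~\ref{lm_0} (all intrinsic diameters), and absorbing $c(v)$ into the factor $2$. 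No perturbation of $C$ is required.

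\emph{Construction.} Traverse the tree from leaves to root, keeping a current Lipschitz $1$-cycle $D$, with $D=C$ at the start. When $\Bb^j_i$ is reached, cut $D$ at $\Si^j_i$ and let $A^j_i$ be the part lying on the $\Tt^j_i$-side. An induction up the tree shows that at that moment $A^j_i=E^j_i-\sum_{k\in L^j_i}\gamma^{j+1}_k$, where $E^j_i$ is the sum of those arcs of the original $C$ whose image lies in $U^j_i$ and the $\gamma^{j+1}_k$ were produced at earlier stages; in particular $\dl A^j_i$ is a $0$-chain supported on the connected manifold $\Si^j_i$, of total degree $0$ there since $\Si^j_i$ separates $M$ and $D$ is a cycle. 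Hence $\dl A^j_i=-\dl\gamma^j_i$ for a $1$-chain $\gamma^j_i$ in $\Si^j_i$ got by pairing the intersection points and joining each pair by a minimising arc in $\Si^j_i$; since $D$ coincides with $C$ on $\Nn^j_i$ (no previously processed body is adjacent to $\Nn^j_i$, and the necks are disjoint), the choice of $\rho_0$ gives $\mass_1(\gamma^j_i)\le\#(D\cap\Si^j_i)\cdot\diam(\Si^j_i)\le 2B(v)\,\mass_1(C\cap\Nn^j_i)$. Set $C^j_i:=A^j_i+\gamma^j_i=E^j_i+\gamma^j_i-\sum_{k\in L^j_i}\gamma^{j+1}_k$, which is a cycle by the degree count, and replace $D$ by $D-C^j_i$. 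Its image lies in $\Bb^j_i$: $E^j_i\subset U^j_i\subset\Bb^j_i$, $\gamma^j_i\subset\Si^j_i\subset\Bb^j_i$, and each $\gamma^{j+1}_k\subset\Si^{j+1}_k\subset\Bb^j_i$ by (a). At the end $D$ has become a cycle $C^1$ with image in $\Bb^1$ (put $\gamma^1:=0$), and telescoping the replacements yields $C=C^1+\sum_{j\ge2,\,i}C^j_i$: a decomposition of the stated form with at most $\sum_j N_j\le N(v,D)\,k(v,D)$ summands, which is (1) and (2).

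\emph{Estimate.} Summing $C^j_i=E^j_i+\gamma^j_i-\sum_{k\in L^j_i}\gamma^{j+1}_k$ over all bodies, and using that $\sum_{j,i}\mass_1(E^j_i)=\mass_1(C)$ and each $\gamma^j_i$ appears in exactly two of the summands (in $C^j_i$ and in the cycle attached to the parent of $\Bb^j_i$),
\[
\sum_k\mass_1(C_k)\ \le\ \mass_1(C)+2\sum_{j\ge2,\,i}\mass_1(\gamma^j_i)\ \le\ \mass_1(C)+4B(v)\sum_{j\ge2,\,i}\mass_1(C\cap\Nn^j_i)\ \le\ (4B(v)+1)\,\mass_1(C),
\]
the last step by the disjointness of the necks. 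If the decomposition has only one level, then $M=\Bb^1$ and there is nothing to prove; otherwise $k(v,D)\ge 2$ and $4B(v)+1\le(2B(v)+1)^2\le(2B(v)+1)^{k(v,D)}$, which is (3). (Alternatively, performing the cutting one level of the tree at a time costs a factor $2B(v)+1$ per level and gives the same bound without invoking cross-level disjointness of the necks.)

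\emph{Main obstacle.} The crux is the mass bound for the closing chains $\gamma^j_i$: it is not enough to close the cut cycle up somehow, one must do so inside the thin neck at a cost proportional to the length of $C$ actually running through that neck. This is exactly where Lemma~\ref{lm_0} (a thin neck has diameter comparable to its thickness) combines with the coarea choice of the slicing radius $\rho_0$, so that $\#(C\cap\Si^j_i)$ is controlled by $\mass_1(C\cap\Nn^j_i)/\thick(\Nn^j_i)$ rather than being arbitrarily large. Verifying the two structural facts (a) and (b) about how the Cheeger--Naber necks sit inside the bodies, and the genericity making the merely Lipschitz cycle $C$ meet every $\Si^j_i$ in finitely many points, are the remaining points needing care.
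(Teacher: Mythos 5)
Your proof is correct but takes a genuinely different route from the paper's. The paper works loop by loop: it partitions each constituent loop $f_k$ into arcs alternating between the current body $\Bb^j_i$ and the adjacent subtrees $\Tt^{j+1}_{i_s}$, connects the exit/re-entry points by a minimizing geodesic inside each neck, bounds the total geodesic length by $B(v)\,\length(f_k)$ via Lemma~\ref{lm_0}, and then recurses one level down the tree, picking up a multiplicative factor $(2B(v)+1)$ at each of the at most $k(v,D)$ levels. You instead insert a separating slice $\Si^j_i\cong S^3/\Ga^j_i$ in the middle of every neck, choose the slicing radius by a one-dimensional coarea argument so that $C$ meets each $\Si^j_i$ in a controlled finite set, close the resulting arcs by short paths inside the slices, and perform a single bottom-up tree traversal. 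The coarea choice is a real technical improvement: it makes the argument work for an arbitrary Lipschitz cycle without any implicit genericity of the partition points (a point the paper glosses over), and, together with pairwise disjointness of the necks, it yields a linear bound $(4B(v)+1)$ in place of the paper's exponential $(2B(v)+1)^{k(v,D)}$, which you then dominate by the stated quantity. The price is reliance on your structural facts (a) and (b) --- that the middle slice $\Si^j_i$ lies in $\Bb^j_i\cap\Bb^{j-1}_{i'}$ and that distinct necks are disjoint --- which are plausible from the Cheeger--Naber construction but are asserted rather than derived; you rightly flag them as the points needing care. The level-by-level variant you mention at the end avoids cross-level disjointness entirely and reproduces the paper's exact bound, so the overall argument is robust.
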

\begin{proof} Suppose the 1-cycle $C=\sum_k a_k f_k$, where each $f_k:S^1\ra M$ is a Lipschitz map. We consider the image of each map $f_k$ respectively. We are going to decompose the image of each $f_k$ into bodies $\Bb_i^j$. In other words, we are going to represent the cycle $f_k$ as a sum $\sum_{m=1}^{N_k}C_{m}^k$ by preforming some construction on the image of $f_k$ in $M$, where each $C_{m}^k$ is a Lipschitz 1-cycles whose image is contained in some $\Bb^j_i$ and $N_k$ is a positive integer.

We are going to construct this decomposition by induction. Recall that $\Tt^j_i$ is the union of some bodies in Definition~\ref{def1}. Let $j$ be the largest integer such that the image of $f_k$ is contained in $\Tt^j_i$, for some $i$. We will represent $f_k$ by $C_j^k+\sum_m C^k_{j+1,m}$ such that the image of $C_j^k$ is contained in $\Bb^j_i$, and the image of each $C^k_{j+1,m}$ is contained in $\Tt_{i_m}^{j+1}$.

\begin{figure}[htbp]
\begin{minipage}[c]{0.47\textwidth}
\centering\includegraphics[width=7cm]{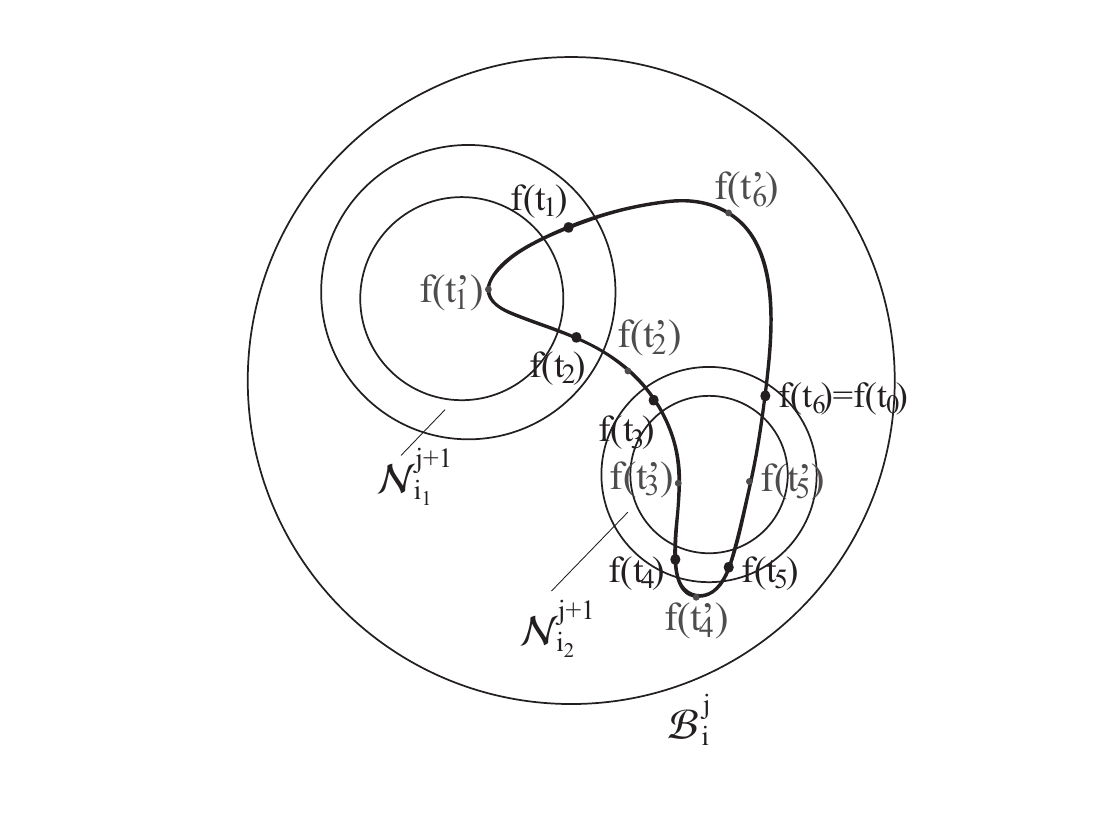}
\vspace*{6pt}
\caption{Choosing the points $f(t_k)$ and $f(t'_k)$}\label{fig1}
\end{minipage}
\begin{minipage}[c]{0.47\textwidth}
\centering\includegraphics[width=7cm]{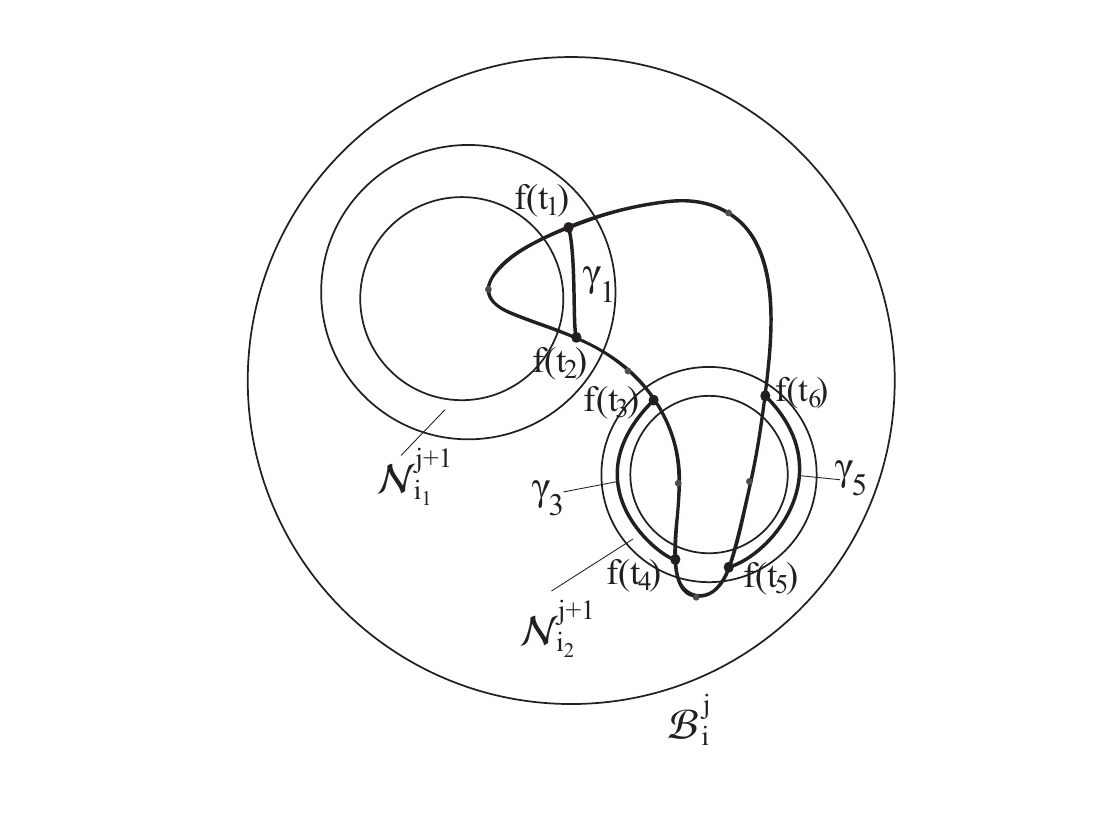}
\vspace*{6pt}
\caption{Connecting $f_k(t_s)$ and $f_k(t_{s+1})$ by a minimizing geodesic $\ga_s$}\label{fig2}
\end{minipage}
\end{figure}

Let us identify the domain $S^1$ of $f_k$ with the interval $[0,1]$ and parameterize $f_k$ by $f_k:[0,1]\ra M$ such that $f_k(0)=f_k(1)$. We first take a partition $0=t_{n+1}=t_0<t_1<\dots<t_n=1$ of $[0,1]$ such that:
\begin{enumerate}
\item Each point $f_k(t_s)$ is contained in some neck $\Nn^{j+1}_{i_s}$.
\item The arcs $f_k([t_s,t_{s+1}])\su \Bb^j_i$ for $s$ even and $f_k([t_s,t_{s+1}])\su \Tt^{j+1}_{i_s}$ for $s$ odd.
\end{enumerate}

We then take a refinement of the above partition. For any integer $s$, we pick $t_s'$ with $t_s<t_s'<t_{s+1}$ such that $f_k(t_s')\in \Bb^j_i- \Tt^{j+1}_{i_s}$ for $s$ even and $f_k(t_s')\in \Tt^{j+1}_{i_s}- \Bb^{j}_i$ for $s$ odd. (See Figure~\ref{fig1}.)

When $s$ is odd, suppose that $f_k([t_s,t_{s+1}])\su T^{j+1}_{i_s}$. Because $\Nn^{j+1}_{i_s}$ is connected, both $f_k(t_s)$ and $f_k(t_{s+1})$ are contained in $\Nn^{j+1}_{i_s}$. In this case, we connect $f_k(t_s)$ and $f_k(t_{s+1})$ by a minimizing geodesic $\ga_s$ in $\Nn^{j+1}_s$. (See Figure~\ref{fig2}.)

By Lemma \ref{lm_0}, (also see Figure~\ref{fig3}), we have
$$\length(\ga_s)\leq \diam(\Nn^{j+1}_{i_s})\leq B(v)\x \thick(\Nn^{j+1}_{i_s})\leq B(v)\x \length(f_k([t_s',t_{s+1}'])).$$
Therefore, we have an estimation of the sum of the length of curves $\ga_s$ given by
$$\sum_{s \text{ odd}}\length(\ga_s)\leq B(v)\x \sum_{s \text{ odd}} \length(f_k([t_s',t_{s+1}']))\leq B(v)\x \length(f_k).$$

When $s$ is even, by construction, the arc $f_k([t_s,t_{s+1}])\su \Bb^j_i$. Therefore, the image of the curve $f_k([t_0,t_1])\cup\ga_1\cup\dots\cup\ga_{n-1}\cup f_k([t_n,t_{n+1}])$ is contained in $\Bb^j_{i}$. We take $C^m_j$ to be a 1-cycle whose image is this curve. (See Figure~\ref{fig4}.)

When $s$ is odd, the arc $f_k([t_s,t_{s+1}])\su \Tt^{j+1}_{i_s}$ and $f_k([t_s,t_{s+1}])\cup(\ga_s)$ is a closed curve. For a fixed $i$, we take $C^m_{j+1,i}$ to be a cycle whose image is the union of all $f_k([t_l,t_{l+1}])\cup(\ga_l)$'s such that $f_k([t_l,t_{l+1}])\su \Tt_{i}^{j+1}$.

\begin{figure}[htbp]
\begin{minipage}[c]{0.47\textwidth}
\centering\includegraphics[width=7cm]{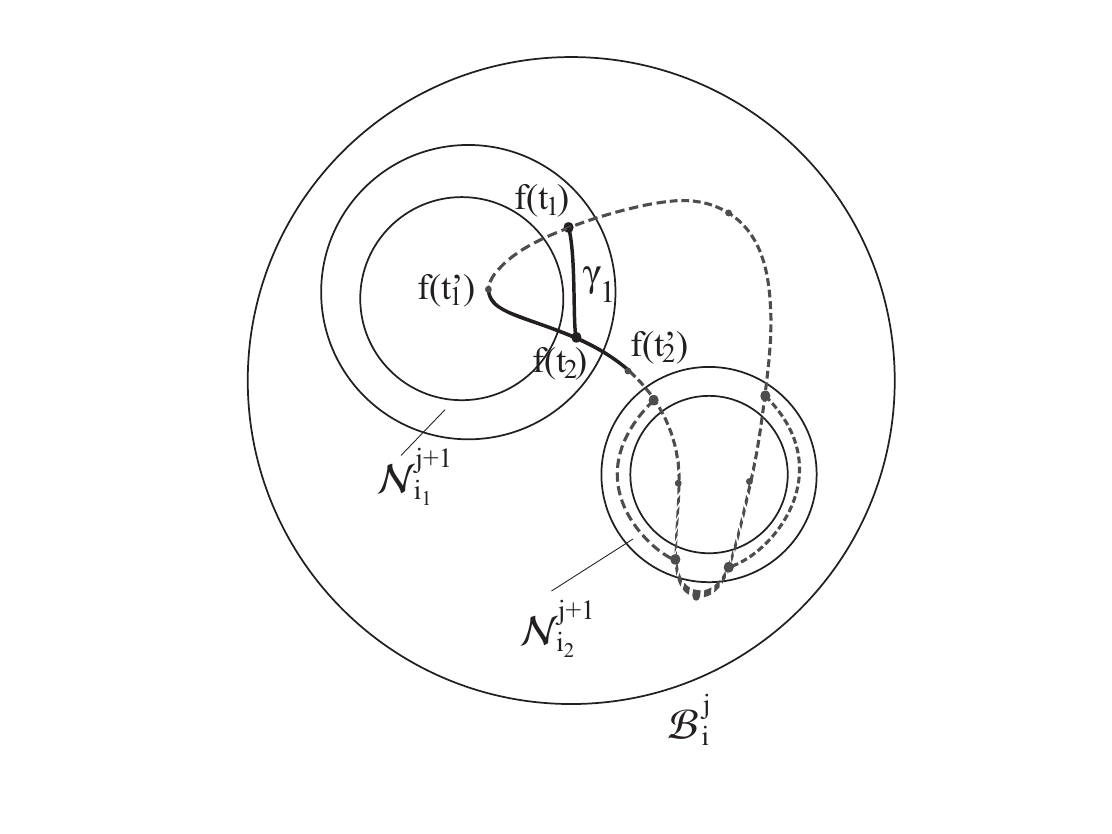}
\vspace*{6pt}
\caption{$\length(\ga_1)\leq B(v)\x \length(f([t_1',t_2'])).$}\label{fig3}
\end{minipage}
\begin{minipage}[c]{0.47\textwidth}
\centering\includegraphics[width=7cm]{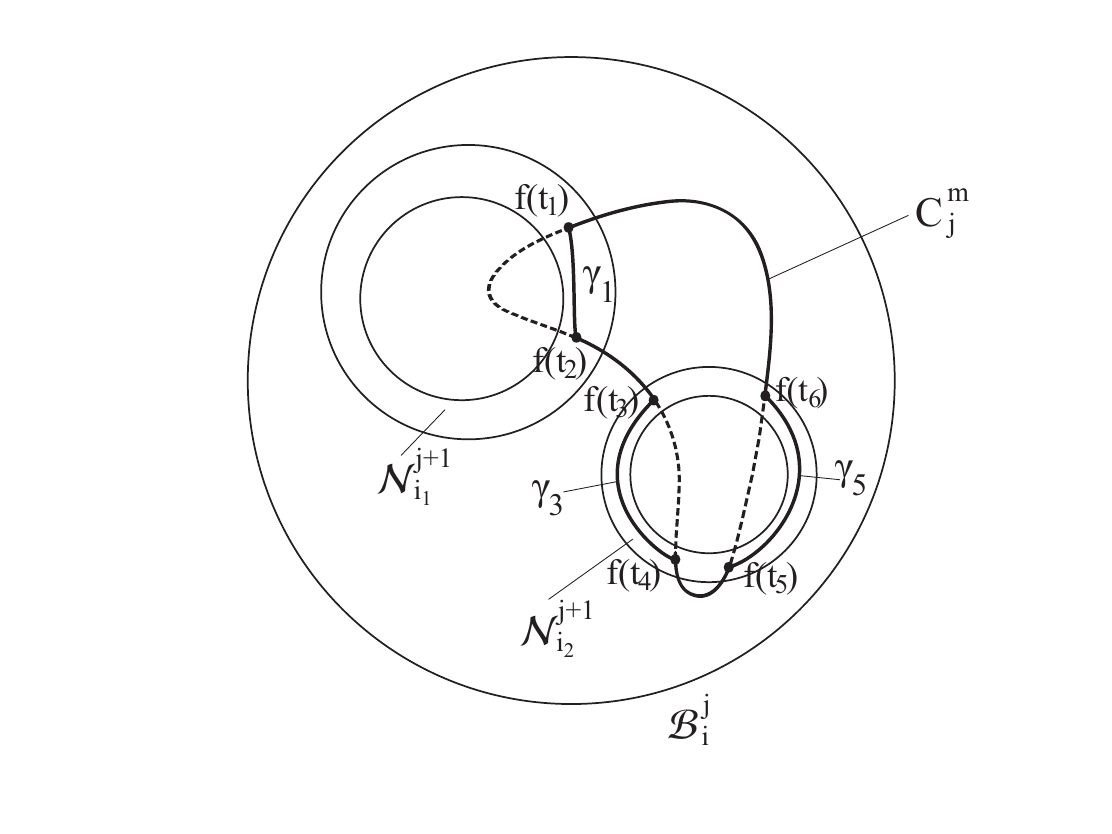}
\vspace*{6pt}
\caption{Choosing $C_j^m$.}\label{fig4}
\end{minipage}
\end{figure}

Then $f_k$ can be represented by $C^k_j+\sum_{i\in I} C^k_{j+1,i}$, for some index set $I$ and
$$\mass_1(C^k_j)+\sum_{i\in I}\mass_1(C^k_{j+1,i})\leq \length(f_k)+2\sum_{s \text{ odd}}\length(\ga_s)\leq (2B(v)+1)\x \length(f_k).$$

We then apply the same construction to each $C^k_{j+1,i}$ and to every $k$. Now for every fixed $\Bb^j_i$, let $C_{i_j}$ be the cycle whose image is the union of all $C^k_{m,l}$'s such that the image of $C^k_{m,l}\su \Bb^j_i$.

Then $C=\sum_{i_j\in J} C_{i_j} $, for some index set $J$. The size of $J$ is bounded by the total number of the bodies $n(v,D) \leq N(v,D)k(v,D)$. And the mass $$\sum_{i_j\in J} \mass_1(C_{i_j})\leq (2B(v)+1)^{k(v,D)}\mass_1(C),$$
\end{proof}

As described above, our next step is to show that for any Lipschitz 1-cycle $C$ whose image is contained in a single body $\Bb_i^j$, it either bounds a ``small'' 2-chain whose image is contained in $\Tt_i^j$, or it is homologous to some cycle in $\Nn^{j}_i$. And the idea is to apply certain finiteness property of the geodesic graph $\Ga$ we constructed in Section~2.2. To begin with, we first show that the cycle $C$ is homologous to a cycle $C'$ in subgraph of $\Ga$. The centers of this subgraph are in $B_i^j$. The mass of this cycle $C'$ is comparable with the mass of the cycle $C$.

\begin{lemma}\label{lm3}
For any singular Lipschitz 1-cycle $C$ with its image contained in the body $\Bb_i^j$, $C$ is homologous to a 1-cycle $C'$ in $\Ga$. The image of $C'$ is also contained in $\Bb_i^j$. And the simplicial length of $C'$ is bounded by $\ti{N}^2\cdot\mass_1(C)/R_i^j$, where $R_i^j={r_h}^j_i/20$ as we defined in Setcion~2.2. Furthermore,
$$mass_1(C') \leq 2\ti{N}^2\cdot\mass_1(C).$$
There is a singular 2-chain $E$ with $\dl E=C-C'$ and
$$\mass_2(E)\leq 20R_i^j\x (2\ti{N}^2+3)\cdot\mass_1(C').$$
\end{lemma}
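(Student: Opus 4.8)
The plan is to realise $C'$ as the push-forward of $C$ onto the geodesic graph $\Ga$, in the spirit of Nabutovsky and Rotman, and to build $E$ by filling one at a time the small ``ladder cells'' between $C$ and $C'$ with the coned $2$-chains supplied by Lemma~\ref{lm1}.

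Write $C=\sum_k a_kf_k$ with each $f_k\colon S^1\to M$ Lipschitz and $f_k(S^1)\su\Bb_i^j$. The covering $\mathcal O(\Bb_i^j)$ consists of balls $B_{R_i^j}(x_l)$ with $R_i^j={r_h}_i^j/20$, and we may assume the centres $x_l$ form a maximal $(R_i^j/2)$--separated net of $\Bb_i^j$; then distinct centres are $\ge R_i^j/2$ apart, so every edge of $\Ga$ has length in $[R_i^j/2,2R_i^j)$. For each $k$ with $\mass_1(f_k)\ge R_i^j$, take a partition $0=t_0<\dots<t_{m_k}=1$ of the domain with $\length(f_k([t_p,t_{p+1}]))\le R_i^j/2$ and $m_k\le 3\mass_1(f_k)/R_i^j$; then each arc $A_p=f_k([t_p,t_{p+1}])$ is contained in some covering ball $B_{R_i^j}(x_{l_p})$ (any point of $A_p$ lies in some $B_{R_i^j/2}(x_{l_p})$, and $A_p$ has length $\le R_i^j/2$). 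Since $A_p$ and $A_{p+1}$ share $f_k(t_{p+1})$, the balls $B_{R_i^j}(x_{l_p}),B_{R_i^j}(x_{l_{p+1}})$ meet, so the edge $\sigma_p:=E_{l_pl_{p+1}}$ lies in $\Ga$ (a constant segment if $x_{l_p}=x_{l_{p+1}}$); let $C'_k$ be the concatenation of the non-constant $\sigma_p$, a closed loop since $x_{l_{m_k}}=x_{l_0}$. For a component with $\mass_1(f_k)<R_i^j$ (which then lies in a single covering ball) put $C'_k=0$, and set $C':=\sum_k a_kC'_k$. Each $\sigma_p$ is a minimising geodesic of length $<2R_i^j={r_h}_i^j/10$ from $x_{l_p}\in\Bb_i^j$, so its image lies within ${r_h}_i^j/10$ of $x_{l_p}$; using that the harmonic radius is $\ge 20R_i^j$ throughout $\Bb_i^j$ one checks $\sigma_p\su\Bb_i^j$, whence the image of $C'$ is contained in $\Bb_i^j$. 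Now the simplicial length of $C'$ is at most $\sum_k|a_k|m_k\le\ti N^2\mass_1(C)/R_i^j$ (absorbing the constant into $\ti N^2$), and since each edge has length $<2R_i^j$, $\mass_1(C')\le 2R_i^j\cdot(\text{simplicial length of }C')\le 2\ti N^2\mass_1(C)$.

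For $E$: for each $k$ and $p$ let $\tau_p$ be a minimising geodesic from $x_{l_p}$ to $f_k(t_p)$ (of length $<R_i^j$) and set $\gamma_p=\tau_p*A_p*\tau_{p+1}^{-1}*\sigma_p^{-1}$, a loop of length $<\length(A_p)+\length(\sigma_p)+2R_i^j$. All four sides of $\gamma_p$ lie within $O(R_i^j)$ of $x_{l_p}$, hence inside $B_{{r_h}_i^j/2}(x_{l_p})\subseteq B_{r_h(x_{l_p})/2}(x_{l_p})$; coning $\gamma_p$ to $x_{l_p}$ as in the proof of Lemma~\ref{lm1} — and using that $\gamma_p$ lies in a ball of radius $O(R_i^j)$, so that its lift stays within $O(R_i^j)$ of the origin — produces a $2$-chain $E_p^k$ with $\partial E_p^k=\gamma_p$ and $\mass_2(E_p^k)\le 20R_i^j\length(\gamma_p)$. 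For a small component let $E^k$ be the cone from the relevant centre to $f_k$, with $\mass_2(E^k)\le 20R_i^j\mass_1(f_k)$. Put $E=\sum_k a_k\big(\sum_pE_p^k\big)+\sum_{k\text{ small}}a_kE^k$. Around each loop the rungs $\tau_p$ cancel between consecutive cells, so $\sum_p\partial E_p^k=f_k-C'_k$ and $\partial E^k=f_k=f_k-C'_k$; hence $\partial E=C-C'$, which in particular establishes that $C$ and $C'$ are homologous. Finally, summing cell masses and expanding $\length(\gamma_p)$ while counting cells by the simplicial length of $C'$, one gets $\mass_2(E)\le 20R_i^j\big(\mass_1(C)+\mass_1(C')+2R_i^j\cdot(\text{simplicial length})\big)$, and the net-separation estimate (each covering ball met by $C$ contributes a definite length to $C'$) lets one replace $\mass_1(C)$ and $2R_i^j\cdot(\text{simplicial length})$ by multiples of $\mass_1(C')$, yielding $\mass_2(E)\le 20R_i^j(2\ti N^2+3)\mass_1(C')$.

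The step I expect to be the main obstacle is this final estimate, together with the local cone bound it relies on. Lemma~\ref{lm1} as stated bounds a filling by $\mass_1\cdot r_h(x)$, and $r_h(x_{l_p})$ may be far larger than $R_i^j$; one therefore needs the refinement — visible in the proof of Lemma~\ref{lm1} together with Lemma~\ref{lem1_c} — that a loop inside a ball of radius $\rho\le r_h(x)/2$ bounds a cone to $x$ of mass $\lesssim\rho\cdot\length(\gamma)$. The other delicate point is that the cell-by-cell construction most naturally bounds $\mass_2(E)$ by something of order $R_i^j\mass_1(C)$, and converting this into a bound by $\mass_1(C')$ requires both the net-separation of the centres and the separate (direct, via Lemma~\ref{lm1}) treatment of the parts of $C$ that remain inside a single covering ball, which record nothing in $C'$. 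Verifying that the geodesics $\sigma_p$ and the cells $\gamma_p$ really stay inside $\Bb_i^j$ — using that $R_i^j$ is a fixed small fraction of the body's harmonic radius — is a further point that must be checked.
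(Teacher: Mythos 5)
Your construction is essentially the paper's, with one cosmetic difference: the paper pushes each arc of length $R_i^j$ to a \emph{path} $\alpha_m$ in $\Ga$ through the centers of the balls covering that arc (and bounds its simplicial length crudely by $\ti N^2$), whereas you take a finer partition into arcs of length $\le R_i^j/2$, each contained in a single covering ball, and push forward one edge per arc. Your version is cleaner and gives a better simplicial length ($\approx 3\mass_1(C)/R_i^j$ rather than $\ti N^2\mass_1(C)/R_i^j$); the ladder cells $\gamma_p$ and the coning via the refinement of Lemma~\ref{lm1} are the same mechanism the paper uses for its cycles $\gamma_m$. You are also right that the needed filling bound is $\lesssim \rho\cdot\length(\gamma)$ for a loop in a ball of radius $\rho\le r_h(x)/2$, not $r_h(x)\cdot\length(\gamma)$; the paper silently uses this too, taking $\rho={r_h}_i^j/2=10R_i^j$ inside a body where the harmonic radius may exceed ${r_h}_i^j$.

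The one place you should stop pushing is the final inequality stated in terms of $\mass_1(C')$. As you yourself half-notice, it cannot hold as written: a component $f_k$ with $\mass_1(f_k)\ge R_i^j$ that stays inside a single covering ball produces $C'_k=0$ (all $\sigma_p$ constant) yet requires a filling $\sum_p E_p^k$ of positive mass, and your own ``small component'' term $E^k$ likewise contributes to $\mass_2(E)$ but not to $\mass_1(C')$. The ``net-separation'' heuristic does not repair this — a long loop crammed into one or two balls records almost nothing in $C'$. The resolution is that this is an inaccuracy in the lemma's statement rather than a gap to fill: the paper's own computation yields
$$\mass_2(E)\ \le\ 20R_i^j\,(2\ti N^2+3)\cdot\mass_1(C),$$
and this is precisely the form invoked in the proof of Lemma~\ref{lm5} (where the statement also uses the slightly different simplicial-length bound $2\ti N^2\mass_1(C)/R_i^j$). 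So your argument, summed cell by cell and bounded in terms of $\mass_1(C)$ (using $\sum_p\length(\gamma_p)\le\mass_1(f_k)+\mass_1(C'_k)+2R_i^j m_k\lesssim\ti N^2\mass_1(f_k)$), proves the version of the lemma that the paper actually uses, and the attempt to convert to $\mass_1(C')$ should simply be dropped.

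Two small checks you should keep: your assumption that the centers are an $(R_i^j/2)$-separated net is not stated in the paper and is not needed once you bound everything by $\mass_1(C)$; and the containment $\gamma_p\subset B_{{r_h}_i^j/2}(x_{l_p})\subset\Bb$ does hold, since every piece of $\gamma_p$ stays within $3R_i^j<{r_h}_i^j/2$ of $x_{l_p}$.
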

\begin{proof} Suppose that $C=\sum a_kf_k$, where $a_k\in\Z$ and $f_k:S^1\ra M$ is Lipschitz. Let us consider each map $f_k$. We parameterize the map $f_k$ by $f_k:[0,1]\ra M$. Let us take a partition $0=t_0<t_1<\dots <t_L=1$ of the interval $[0,1]$ such that $\length(f_k(t_m,t_{m+1}))=R_i^j$. We have
$$L\leq\mass_1(f_k)/R_i^j.$$

Suppose that $f_k(t_m)\in B_{R_i^j}(x)$ and $f_k(t_{m+1})\in B_{R_i^j}(y)$ for some $x,y \in \Bb_i^j$. Denote by $\si_m$ a minimizing geodesic connecting $f_k(t_m)$ and $x$, and $\si_{m+1}$ a minimizing geodesic connecting $f_k(t_{m+1})$ and $y$. Note that by the triangle inequality, the distance between $x$ and $y$ is less than $3R_i^j$, so $B_{R_i^j}(x)\cap B_{R_i^j}(y)=\emptyset$.

Our next step is to construct a 1-chain $\al_m$ in $\Ga$ with boundary $x$ and $y$ so that the 1-cycle $\ga_m:=-\al_m\cup \si_m\cup f_k(t_m,t_{m+1})\cup (-\si_{m+1})$ is contained in $B_{{r_{h}}_i^j(x)/2}(x)$. The 1-chain $\al_m$ is constructed in following way. We partite the interval $[t_i,t_{i+1}]$ further into $t_i=t_{i,0} < t_{i,1}<\cdots <t_{i,u}=t_{i+1}$ such that if $f_k(t_{i,v}) \in B_{R_i^j}(x_v)$ and $f_k(t_{i,v+1}) \in B_{R_i^j}(x_{v+1})$, then $B_{R_i^j}(x_v) \cap B_{R_i^j}(x_{v+1}) \neq \emptyset$. Note that those $x_v$ may repeat. There is a 1-chain in $\Ga$ connecting $x$ and $y$ through centers $x_v$'s.  In other words there is a continuous map $\beta_m:[0,1] \ra \Ga$ and $0=s_0 < \cdots <s_u=1$, such that $\beta_m(0)=x$ and $\beta_m(1)=y$ and $\beta_m(s_v)=x_v$.

If $\beta_m(s_v)=\beta_m(s_{v'})$, we delete $\beta_m([s_v, s_{v'}])$. We define $\al_m$ to be the resulting a 1-chain after we deleted all the repetitions in $\be_m$. Therefore, the simplicial length of $\al_m$ is bounded by $\ti{N}^2$, and hence the 1-mass of $\al_m$ is bounded by $2R_i^j \cdot \ti{N}^2$. And therefore,
$$\mass_1(\ga_m)\leq 2R_i^j\x\ti{N}^2+3R_i^j.$$

Now by Lemma~\ref{lm1}, each $\ga_m$ bounds a 2-chain $E_m$ with 2-mass bounded by
$$\mass_2(E_m)\leq {r_h}_i^j\x R_i^j\x (2\ti{N}^2+3) \leq 20R_i^j\x(2\ti{N}^2+3)\cdot R_i^j.$$

Let $\al=\sum_m \al_m$. The simplicial length of $\al$ is bounded by $ \ti{N}^2 \cdot L\leq \ti{N}^2\cdot\mass_1(f_k)/R_i^j$ and hence the 1-mass of $\al$ is bounded by $2R_i^j \ti{N}^2 \cdot L\leq 2\ti{N}^2\cdot\mass_1(f_k)$.

Note that $\sum_m \ga_m=f_k-\al$. Therefore, we conclude that $\al$ is homologous to $f_k$ and the cycle $\sum_m \ga_m=f_k-\al$ bounds a 2-chain $\sum_m E_m$ with 2-mass bounded by $20R_i^j\x (2\ti{N}^2+3)\cdot \mass_1(\al)$. Now we apply this construction to all components $f_k$'s and let $C'$ be the sum of all corresponding $\al$'s, we conclude that $C'$ satisfies the desired properties.
\end{proof}

Now we are going to show that a cycle $C$ whose image is contained in $\Bb_i^j$ either bounds a 2-chain in $\Tt_i^j$, or it can be ``move out'' from $\Bb_i^j$. In the later case the cycle $C$ is homologous to a cycle whose image is contained in the neck region $\Nn_i^j\su \Bb_i^j\cap\Bb_{i'}^{j-1}$ for some body $\Bb_{i'}^{j-1}$.

\begin{lemma}\label{lm4}
Let $C$ be a singular Lipschitz 1-cycle with its image contained in a body $\Bb_i^j$. Then either $C$ bounds a singular 2-chain in $\Tt_i^j$ or $C$ is homologous to a singular Lipschitz 1-cycle $C''$ such that
\begin{enumerate}
\item The image of $C''$ is contained in $\Nn_i^j\su \Bb_i^j\cap \Bb^{j-1}_i$.

\item There is a singular 2-chain $E$ with its image contained in $\Tt_i^j$ such that $\dl E=C-C''$.

\item $\mass_1(C'')\leq h(v)\cdot \diam(\Nn_i^j)$, for some function $h$ which only depends on $v$ and $\diam(\Nn_i^j)$ is the diameter of a neck $\Nn_i^j$ as a manifold with pullback metric.
\end{enumerate}
\end{lemma}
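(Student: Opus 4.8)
\emph{Proof idea.} The plan is to decide the alternative by the homology class $[C]\in H_1(\Tt_i^j;\Z)$, the point being that this class is always carried by the outer neck $\Nn_i^j$. Since $H_1(M)=0$, the cycle $C$ bounds a singular $2$-chain $D$ in $M$. Cutting $D$ along a cross-section of $\Nn_i^j$ --- equivalently, applying the Mayer--Vietoris sequence to the splitting $M=\Tt_i^j\cup\ba{M\setminus\Tt_i^j}$, whose intersection is, by Theorem~\ref{thm_f}(3)--(4) and Remark~\ref{rm1}, a collar of the single neck $\Nn_i^j\simeq S^3/\Ga_i^j$ --- shows that the inclusion-induced map $\iota_*:H_1(\Nn_i^j)\ra H_1(\Tt_i^j)$ is surjective; in particular $[C]$ lies in its image. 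Here $H_1(\Nn_i^j)\cong(\Ga_i^j)^{\mathrm{ab}}$ is finite of order at most $C(v,D)$, so $H_1(\Tt_i^j)$ is finite as well. (When $\Bb_i^j=\Bb^1$ is the root there is no outer neck, $\Tt_i^j=M$, $H_1(\Tt_i^j)=0$, and only the first alternative arises.)

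If $[C]=0$ in $H_1(\Tt_i^j;\Z)$, then by definition $C$ bounds a singular $2$-chain in $\Tt_i^j$; since Lipschitz singular homology agrees with singular homology (equivalently, by running the nerve construction of Section~2.2 on the cover $\mathcal O(\Tt_i^j)$), $C$ in fact bounds a Lipschitz singular $2$-chain with image in $\Tt_i^j$, which is the first alternative.

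Otherwise $[C]\neq 0$. Choose $a\in H_1(\Nn_i^j)$ with $\iota_*(a)=[C]$, and, using that the quotient map $\Ga_i^j\ra(\Ga_i^j)^{\mathrm{ab}}$ is onto, an element $\ga\in\Ga_i^j$ representing $a$. Let $r$ be the scale of $\Nn_i^j$, so $F:A_{r/2,\,2r}(0)\ra\Nn_i^j$ is the almost-flat model of \eqref{equ2} with $0\in\R^4/\Ga_i^j$, and let $\Si=F(\{|x|=r\}/\Ga_i^j)$ be the mid cross-section; $\Si\hookrightarrow\Nn_i^j$ is a homotopy equivalence. Let $C''$ be the closed geodesic in $\Si$ in the free-homotopy class of $\ga$, built in the round model $S^3_r/\Ga_i^j$ --- where its length is at most $\diam(S^3_r/\Ga_i^j)\le\pi r$ --- and then transported through $F$. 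Then $C''$ is a Lipschitz $1$-cycle with image in $\Nn_i^j\su\Bb_i^j\cap\Bb_{i'}^{j-1}$, with $[C'']=\iota_*(a)=[C]$ in $H_1(\Tt_i^j)$, and, using the $\ep(v)$-closeness of \eqref{equ2} and the inequality $\diam(\Nn_i^j)\ge\thick(\Nn_i^j)\ge\tfrac{3r}{2\sqrt{1+2\ep(v)}}$ from the proof of Lemma~\ref{lm_0},
\[
\mass_1(C'')\;\le\;\pi r\sqrt{1+2\ep(v)}\;\le\;h(v)\cdot\diam(\Nn_i^j),\qquad h(v)=\tfrac{2\pi}{3}\bigl(1+2\ep(v)\bigr).
\]
Since $[C-C'']=0$ in $H_1(\Tt_i^j)$, the previous paragraph applies to $C-C''$ and produces a Lipschitz singular $2$-chain $E$ with image in $\Tt_i^j$ and $\dl E=C-C''$. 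This gives properties (1)--(3).

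The main obstacle is the first paragraph: one must read off from the combinatorics of Theorem~\ref{thm_f} that $M$ is the union of $\Tt_i^j$ and the complementary bodies glued along a collar of the \emph{single} neck $\Nn_i^j$ --- this is where both the tree structure and the hypothesis $H_1(M)=0$ are used --- so that Mayer--Vietoris exhibits $H_1(\Tt_i^j)$ as a quotient of $(\Ga_i^j)^{\mathrm{ab}}$. A secondary technical point is to guarantee that the $2$-chains above have image exactly in $\Tt_i^j$ and not merely in a neighbourhood of it; this uses the precise cover $\mathcal O(\Tt_i^j)$ together with the product structure $\Nn_i^j\cong\R\z S^3/\Ga_i^j$ near the outer boundary. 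The production of the short loop $C''$ and the passage from singular to Lipschitz $2$-chains are routine.
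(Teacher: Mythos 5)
Your proof is correct in outline but takes a genuinely different route from the paper, and it is worth comparing the two. The paper argues \emph{geometrically}: since $H_1(M)=0$, take a singular $2$-chain $D$ in $M$ with $\partial D=C$; if $D\subset\Tt_i^j$ we are in the first case, and otherwise perturb $D$ so that it meets the outer boundary of $\Nn_i^j$ transversally and define $C'$ to be the resulting intersection $1$-cycle in $\Nn_i^j$. The short representative $C''$ in the class $[C']\in H_1(\Nn_i^j)$ is then produced by Gromov's lemma (short generators of $H_1$), giving $h(v)=2h_1(v)^{h_1(v)}$. You instead argue \emph{algebraically}: Mayer--Vietoris on the splitting along a collar of $\Nn_i^j$, together with $H_1(M)=0$, shows $\iota_*:H_1(\Nn_i^j)\ra H_1(\Tt_i^j)$ is surjective; you then decide the alternative by $[C]\in H_1(\Tt_i^j)$ and, in the second case, lift $[C]$ to $(\Ga_i^j)^{\mathrm{ab}}$ and take the closed geodesic on the mid cross-section in the free-homotopy class of a lift $\ga\in\Ga_i^j$. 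Your dichotomy is cleaner and your bound $h(v)=\tfrac{2\pi}{3}(1+2\ep(v))$ is substantially better than the paper's; the cost is that you must set up the Mayer--Vietoris decomposition carefully (disjointness of the pieces outside a collar of the single neck, connectivity, and the passage from singular to Lipschitz chains), whereas the paper's cut-and-paste avoids this.

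Two small points to tighten. First, the intermediate inequality ``length of the closed geodesic in the class of $\ga$ is at most $\diam(S^3_r/\Ga_i^j)$'' is false as stated: in $\R P^3_r$ the shortest noncontractible geodesic has length $\pi r$ while $\diam(\R P^3_r)=\pi r/2$. What you actually want is that the minimal displacement of any $\ga\in\Ga_i^j$ acting isometrically on $S^3_r$ is at most $\pi r$ (the extrinsic diameter of $S^3_r$), which does give $\mass_1(C'')\le\pi r\sqrt{1+2\ep(v)}$ and leaves your final $h(v)$ unchanged. Second, the Mayer--Vietoris surjectivity, which you correctly flag as the heart of the matter, does need the tree structure of Theorem~\ref{thm_f}(4) to guarantee that $\Tt_i^j$ meets $\overline{M\setminus\Tt_i^j}$ only along a collar of $\Nn_i^j$; this is true in the construction of Section~2.1 but deserves a sentence of justification rather than being left implicit, since without it one would only get surjectivity of a map from the direct sum of all boundary necks.
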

\begin{proof} By our assumption, the first homology $H^1(M)=0$. Hence the 1-cycle $C$ bounds a 2-chain in $M$. Suppose the image of this 2-chain is not contained in $\Tt_i^j$. Then $C$ is homologous to a Lipschitz 1-cycle $C'$ in $\Nn_i^j$. To see this, one can take a small perturbation of the image of the 2-chain so that the intersection between the 2-chain and and boundary of $\Nn_i^j$ is transverse and we define $C'$ to a cycle whose image is given by this intersection.

Note that since $\Nn^j_i$ is diffeomorphic to $\R\z S^3/\Ga^j_i$ for some $\Ga^j_i\su O(3)$, the fundamental group $\pi_1(\Nn^j_i)\cong\Ga^j_i$ is finite. In particular, the size of the first homology group $|H_1(\Nn^j_i)|\leq |\Ga^j_i|\leq h_1(v)$, where $h_1$ is a function which only depends on $v$. By Gromov Lemma \cite{gromov2007metric}, the generators of $H_1(\Nn_i^j)$ can be represented by curves of length bounded by $2\diam(\Nn_i^j)$. Since $H_1(\Nn_i^j)$ is finite abelian group with at most $h_1(v)$ generators, one can choose a representative $C''$ in the class $[C']$ such that $\mass_1(C'')\leq 2h_1(v)^{h_1(v)}\diam(\Nn_i^j):=h(v)\diam(\Nn_i^j)$.
\end{proof}

\begin{remark}
In the above Lemma, we have no control of the size of the 2-chain $E$. In fact the 2-mass of the 2-chain $E$ can be large since we are trying to find a ``nice'' representative of the class $[C']$.
\end{remark}

We are going to show below in Lemma~\ref{lm5} and Lemma~\ref{lm6} that in both two cases of Lemma~\ref{lm4}, one can always construct a ``new'' 2-chain with boundary $C$ such that the 2-mass of the chain is bounded by a function that only depends on $v$, $D$ and the 1-mass of $C$.

\begin{lemma}\label{lm5}
Let $C$ be a singular Lipschitz 1-cycle whose image is contained in $\Bb_i^j$. If $C$ bounds a singular 2-chain whose image is contained in $\Tt_i^j$, then $C$ bounds a singular 2-chain $E$ in $M$ such that $\mass_2(E)\leq g_1(v,D)\cdot\mass_1(C)$, for some function $g_1$ that only depends on $v$ and $D$.
\end{lemma}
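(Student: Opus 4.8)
The plan is to push $C$ onto the geodesic graph $\Ga$ and then run the nerve estimate of Section~2.2 inside the subtree $\Tt_i^j$. First I would apply Lemma~\ref{lm3} to $C$, obtaining a $1$-cycle $C'$ carried by $\Ga$ with all its vertices in $\Bb_i^j$ (hence all incident balls of the common radius $R_i^j$), together with a $2$-chain $E_0$ with $\dl E_0=C-C'$, $\mass_2(E_0)\le 20R_i^j(2\tilde N^2+3)\mass_1(C')\le 40\tilde N^2(2\tilde N^2+3)R_i^j\,\mass_1(C)$, and with $m(C')\le\tilde N^2\mass_1(C)/R_i^j$. Because a harmonic chart is a diffeomorphism onto a proper open subset of the compact manifold $M$ (otherwise $M$ would be diffeomorphic to a Euclidean ball), one has $r_h(x)\le 2D$ for all $x$, so $R_i^j\le D/10$ and the bound for $\mass_2(E_0)$ is already of the form $c(\tilde N)D\,\mass_1(C)$; it remains to fill $C'$.

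Since $C'$ is homologous to $C$, which by hypothesis bounds a $2$-chain supported in $\Tt_i^j$, and since the correcting chain $E_0$ — being assembled from cones inside harmonic balls centred at points of $\Bb_i^j$, too small to reach across the neck $\Nn_i^j$ into the parent body — is itself supported in $\Tt_i^j$, the cycle $C'$ bounds a $2$-chain supported in $\Tt_i^j$. Hence $f(C')$ bounds a simplicial $2$-chain in $\Nc(\Tt_i^j)$; moreover, $C'$ being a geodesic cycle of $\Ga$, its image $f(C')$ simplicializes (as in the discussion preceding Corollary~\ref{cor1}) to a simplicial $1$-cycle $C''$ of simplicial length $m(C')$, and by Corollary~\ref{cor1} this $C''$ bounds a simplicial $2$-chain $C_2$ in $\Nc(\Tt_i^j)$ with at most $m(C')\cdot\tilde N^{4\tilde N^{\tilde N+1}}$ $2$-simplices, all of whose vertices lie in $\Tt_i^j$. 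Next I would map $\Nc(\Tt_i^j)^{(1)}$ into $\Ga\subseteq M$, sending each vertex $f(x_a)$ to $x_a$ and each $1$-simplex to the corresponding minimizing geodesic, and extend over each $2$-simplex by a filling of the resulting geodesic triangle given by Lemma~\ref{lm1}; then $C_2$ is carried to a $2$-chain in $M$ with boundary $C'$ whose mass is at most $m(C')\cdot\tilde N^{4\tilde N^{\tilde N+1}}$ times the largest mass of a filled triangle.

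To bound a filled triangle: a geodesic triangle with vertices $x_a,x_b,x_c$ and largest incident ball radius $R_a$ has all three balls pairwise meeting, so every edge has length $\le 2R_a$ and the whole triangle lies in $B_{4R_a}(x_a)\subseteq B_{r_h(x_a)/2}(x_a)$ (as $R_a\le r_h(x_a)/20$); pulling it back through the harmonic chart at $x_a$ and coning to the origin inside a Euclidean ball of radius comparable to $R_a$ bounds it by a $2$-chain of mass $\le c_0R_a^2$ for a universal $c_0$, exactly as in Lemma~\ref{lm1}. The crucial point — and the one non-formal step — is that every ball of $\mathcal{O}(\Tt_i^j)$ has radius $\le C(v,D)\,R_i^j$: in the construction of Theorem~\ref{thm_f} each descendant body of $\Bb_i^j$ is carved from a metric ball nested inside the ball from which $\Bb_i^j$ is formed, so its diameter, and hence (since the carved-out balls have radius comparable to the harmonic radius at their centres) its smallest harmonic radius, is at most a $(v,D)$-multiple of $\diam(\Bb_i^j)$, which by Theorem~\ref{thm_f}(1) is $\le{r_h}_i^j/r_0(v,D)$. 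Granting this, each filled triangle has mass $\le c_0C(v,D)^2(R_i^j)^2$, so using $m(C')\le\tilde N^2\mass_1(C)/R_i^j$ the filling of $C'$ has mass $\le c_0C(v,D)^2\tilde N^{4\tilde N^{\tilde N+1}+2}R_i^j\,\mass_1(C)\le c_0C(v,D)^2\tilde N^{4\tilde N^{\tilde N+1}+2}(D/10)\,\mass_1(C)$. Adding the estimate for $E_0$ produces $E$ with $\dl E=C$ and $\mass_2(E)\le g_1(v,D)\mass_1(C)$.

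The main obstacle is precisely this scale comparison. Filling $C'$ costs (number of triangles) $\times$ (area of a triangle); the first factor is forced to be $\approx\mass_1(C)/R_i^j$ because $C'$ is recorded at the single scale $R_i^j$ of the root body, and the estimate only survives if the second factor stays $\lesssim(R_i^j)^2$. This is exactly the two-scale difficulty described in the Introduction: if a genuinely coarser scale $R\gg R_i^j$ could appear among the filling triangles, the product would acquire an uncontrolled factor $R^2/R_i^j$. The reason it does not is that $C'$ lives in $\Bb_i^j$ and its filling may be kept inside $\Tt_i^j$, where — by the nesting in Cheeger--Naber's bubble-tree construction together with the lower bound in Theorem~\ref{thm_f}(1) — no body is larger than a $(v,D)$-multiple of $\Bb_i^j$. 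All remaining steps are routine applications of Lemmas~\ref{lm1} and~\ref{lm3} and Corollary~\ref{cor1}.
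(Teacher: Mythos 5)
Your proposal follows the same route as the paper's: project $C$ to the geodesic graph via Lemma~\ref{lm3}, fill the resulting simplicial cycle in the nerve $\Nc(\Tt_i^j)$ via Corollary~\ref{cor1}, map each $2$-simplex back to a geodesic triangle in $\Ga$, and fill each triangle inside a harmonic ball using Lemma~\ref{lm1}. The scale comparison you single out as the crucial non-formal step is exactly the observation the paper makes (stated there in the slightly sharper form that $R_{i'}^{j'}<R_i^j$ for any descendant body $\Bb_{i'}^{j'}$ of $\Bb_i^j$, so each triangle has circumference at most $6R_i^j$ and fits in a harmonic ball of radius $20R_i^j$); your coarser $(v,D)$-multiple bound works just as well and only changes the constants in $g_1$.
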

\begin{proof} Suppose that $C$ bounds a singular 2-chain in $\Tt_i^j$. By Lemma~\ref{lm3}, $C$ is homologous to a 1-cycle $C'$ in $\Ga$. The image of $C'$ is contained in $\Bb_i^j$ and the simplicial length of $C'$ is bounded by $2\ti{N}^2\cdot\mass_1(C)/R_i^j$. Furthermore, there is a singular 2-chain $G$ such that $\dl G=C-C'$ and
$$\mass_2(G)\leq 20R_i^j\x (2\ti{N}^2+3)\x \mass_1(C)\leq20D\x(2\ti{N}^2+3)\x \mass_1(C).$$
Because $C$ and $C'$ are homologous, the cycle $C'$ also bounds a 2-chain in $\Tt^i_j$. If we apply the method in Section~2.2 and Corollary~\ref{cor1}  to $\Nc(\Tt^i_j)$, the 1-cycle $C'$ bounds a 2-chain $F$ in $\Tt^i_j$. The the image of the boundary of $F$ consists of at most $2\ti{N}^{4\ti{N}^{\ti{N}+1} }\ti{N}^2\mass_1(C)/R_i^j$ geodesic triangles in $\Ga$.

Note that these geodesic triangles are also in $\Tt^i_j$. And if $\Bb_{i'}^{j'}$ is a child of $\Bb_{i}^{j}$, then $R_{i'}^{j'}<R_{i}^{j}$. Therefore the circumference of each geodesic triangle is at most $3\x 2R_{i}^{j}$. Hence each geodesic triangle can be contained in a harmonic ball of radius $20R_i^j$. By Lemma~\ref{lm1}, one can fill each geodesic triangle by a 2-chain with area at most $6\x 20(R_i^j)^2$. In other words, the 2-mass of $F$ is bounded by $240\ti{N}^{4\ti{N}^{\ti{N}+1}}\x\ti{N}^2\x\mass_1(C)\cdot R_i^j \leq 240 D\ti{N}^{4\ti{N}^{2\ti{N}}}\x \mass_1(C)$. Let $E=F'+G$. Then $\dl E=\dl(F'+G)=C-C'+C'=C$ and
$$\mass_2(E)\leq (240\ti{N}^{4\ti{N}^{2\ti{N}}}+40\ti{N}^2+60)D\mass_1(C).$$
\end{proof}

\begin{lemma}\label{lm6}
Let $C$ be a singular Lipschitz 1-cycle in $\Bb_i^j$. If $C$ is homologous a singular 1-cycle $C'$ in $\Nn_i^j$ as described in Lemma~\ref{lm4}, then $C$ bounds a singular 2-chain $E$ in $M$ such that $\mass_2(E)\leq g_1(v,D)\cdot\mass_1(C)+g_2(v,D)f(v)$, for some function $g_1$, $g_2$ and $f$.
\end{lemma}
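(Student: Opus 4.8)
The plan is to iterate Lemma~\ref{lm4} and Lemma~\ref{lm5}, climbing the bubble tree toward its root $\Bb^1$. By the hypothesis and Lemma~\ref{lm4}, $C$ is homologous to a cycle $C''$ contained in $\Nn_i^j$ with $\mass_1(C'')\le h(v)\diam(\Nn_i^j)$, and there is a $2$-chain $E_0$ supported in $\Tt_i^j$ with $\dl E_0=C-C''$. The $2$-mass of $E_0$ is not controlled, but its mere existence shows that the $1$-cycle $C-C''$, whose image lies in $\Bb_i^j$, bounds a $2$-chain inside $\Tt_i^j$. Hence Lemma~\ref{lm5} applies to $C-C''$ and produces a $2$-chain $\widetilde E_0$ in $M$ with $\dl\widetilde E_0=C-C''$ and $\mass_2(\widetilde E_0)\le g_1(v,D)\bigl(\mass_1(C)+h(v)\diam(\Nn_i^j)\bigr)$. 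It then remains to fill $C''$, and the decisive point is that $C''$ lies in $\Nn_i^j\su\Bb^{j-1}_{i'}$, the parent body, which sits one level closer to the root.

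I would make this precise as an induction on the tree level. Let $D'=D'(v,D)$ be an upper bound for the diameters of all necks in the decomposition of Theorem~\ref{thm_f}; such a bound follows from the construction there together with the near-Euclidean estimate used in Lemma~\ref{lm_0}. \textbf{Claim.} Every Lipschitz singular $1$-cycle $Z$ whose image is contained in a body at level $m$ bounds a $2$-chain in $M$ of $2$-mass at most $g_1(v,D)\mass_1(Z)+2(m-1)\,g_1(v,D)h(v)D'$. To prove the Claim, apply Lemma~\ref{lm4} to $Z$ (with $\Bb_i^j$ now the body of $Z$). In the first alternative $Z$ bounds a $2$-chain in $\Tt_i^j$, so Lemma~\ref{lm5} gives a filling of $2$-mass $\le g_1(v,D)\mass_1(Z)$, which suffices. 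In the second alternative $Z$ is homologous to a cycle $Z''$ lying in a neck contained in the parent body (at level $m-1$), with $\mass_1(Z'')\le h(v)D'$, and $Z-Z''$ bounds a $2$-chain in $\Tt_i^j$; re-filling $Z-Z''$ by Lemma~\ref{lm5} costs at most $g_1(v,D)\bigl(\mass_1(Z)+h(v)D'\bigr)$, while filling $Z''$ by the inductive hypothesis at level $m-1$ costs at most $g_1(v,D)h(v)D'+2(m-2)g_1(v,D)h(v)D'$; the sum of these two is exactly the bound in the Claim. The base case $m=1$ is automatic, since $\Tt^1$ is all of $M$ by Remark~\ref{rm1}, so every cycle in $\Bb^1$ that bounds in $M$ falls into the first alternative.

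Applying the Claim to the original cycle $C$, whose body lies at level $j\le k(v,D)$, gives
\[
\mass_2(E)\ \le\ g_1(v,D)\,\mass_1(C)\ +\ 2\bigl(k(v,D)-1\bigr)g_1(v,D)D'(v,D)\cdot h(v),
\]
which is the asserted inequality with $f(v)=h(v)$ and $g_2(v,D)=2\bigl(k(v,D)-1\bigr)g_1(v,D)D'(v,D)$.

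The heart of the argument --- and the reason Lemma~\ref{lm4} does not suffice on its own --- is that the $2$-chain it supplies between two consecutive cycles carries no mass bound; the device that rescues the proof is to discard that chain and instead re-fill each difference $Z-Z''$ with Lemma~\ref{lm5}, which only needs the qualitative fact that $Z-Z''$ bounds somewhere inside the corresponding subtree. The secondary point to verify is that the recursion is well-founded: each step strictly lowers the tree level, because Lemma~\ref{lm4} always pushes the cycle into the neck joining its body to the parent, so the process terminates after at most $k(v,D)$ iterations --- precisely what keeps $g_2$ a function of $v$ and $D$ alone.
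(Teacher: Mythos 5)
Your proof is correct, and it reaches the same bound, but it is organized along a genuinely different line from the paper's. The common starting move is the same: invoke Lemma~\ref{lm4} to replace $C$ by a cycle in the neck $\Nn_i^j$, then refill the difference inside $\Tt_i^j$ using Lemma~\ref{lm5} (discarding the uncontrolled $2$-chain supplied by Lemma~\ref{lm4}, exactly as you point out). The divergence is in how the residual neck cycle is handled. The paper does not recurse: it observes that the neck cycle has $1$-mass $\leq h(v)\tilde N R_i^j$, pushes it via Lemma~\ref{lm3} onto the geodesic graph $\Ga$ to obtain a $1$-cycle $C''$ of \emph{simplicial length} $\leq \tilde N^3 h(v)$ (a quantity independent of $\mass_1(C)$), and then, since $H_1(M)=0$, applies Corollary~\ref{cor1} to the full nerve $\Nc$ and fills each resulting geodesic triangle with the crude estimate $\leq 3D^2$; the total cost is a constant $\sim\tilde N^{4\tilde N^{\tilde N+1}}\tilde N^3 h(v) D^2$ that lands entirely in the $g_2$ term. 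You instead observe that the neck cycle lives in the parent body (one level closer to the root) and run an induction on tree depth, terminating at $\Bb^1$ where $\Tt^1=M$ makes the first alternative of Lemma~\ref{lm4} automatic. Both arguments ultimately invoke the global nerve once (you through Lemma~\ref{lm5} at the root, the paper through Corollary~\ref{cor1} applied to $\Nc$), and both recursion bounds are well-founded because the bubble tree has depth $\leq k(v,D)$ and all neck diameters are $\leq\tilde N D$. Your version is conceptually cleaner in that it keeps every triangle filled at the scale of the body to which it belongs, whereas the paper pays $D^2$ per triangle for the residual cycle; the paper's version is shorter because it avoids the explicit induction, accepting a cruder constant. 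The resulting $g_2$'s are of the same order of magnitude.
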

\begin{proof} By Lemma~\ref{lm4}, the 1-mass of the 1-cycle satisfies $\mass_1(C')\leq h(v)\diam(\Nn_i^j)$.  Since $\Nn_i^j$ is covered by no more than $\ti{N}$ balls of radius $R_i^j$, $\diam(\Nn_i^j) \leq \ti{N}\x R_i^j \leq \ti{N}D$.

Therefore, $\mass_1(C')\leq h(v)\x \ti{N}R_i^j \leq h(v)\x\ti{N}D$. And by Lemma~\ref{lm3}, $C$ is homologous to $C''$ in $B^i_j$. $C''$ is a 1-cycle in $\Ga$ such that $\mass_1(C'')\leq 2R_i^j\ti{N}^3h(v) \leq 2D\ti{N}^3h(v) $ and its simplicial length is bounded by $\ti{N}^3h(v)$. Therefore, by Lemma~\ref{lm5}, the cycle $C-C''$ bounds a 2-chain $G$ such that $\mass_2(G)\leq g_1(v,D)(\mass_1(C)+\mass_1(C''))$.

Now because $H^1(M)$ is trivial, the cycle $C''$ bounds a 2-chain in $M$. We apply the method in Section~2.2 and Corollary~\ref{cor1} to $C''$ to conclude that the cycle $C''$ bounds a 2-chain $F$ in $M$ such that the image of the boundary of $F$ consists of at most $\ti{N}^{4\ti{N}^{\ti{N}+1} }\ti{N}^3h(v)$ geodesic triangles in $\Ga$. The length of each edge is at most $D$ and the triangle is contained in a harmonic ball of radius at most $D$. By Lemma\ref{lm1} Each triangle can be filled by a 2-chain with 2-mass bounded by $3D^2$. Therefore, we obtain a 2-chain $F$ with $\dl F=C''$ and
$$\mass_2(F)\leq 3\ti{N}^{4\ti{N}^{\ti{N}+1} }\ti{N}^3 \x D^2\x h(v).$$

Let $E=G+F$. Then $\dl E=C-C''+C''=C$ and
$$\mass_2(E)\leq g_1(v,D)\mass_1(C)+g_2(v,D)h(v),$$
where
$$ g_1(v,D)=(240\ti{N}^{4\ti{N}^{2\ti{N}}}+40\ti{N}^2+60)D,
$$
and
$$ g_2(v,D)= 3\ti{N}^{4\ti{N}^{\ti{N}+1} }\ti{N}^3 \x D^2+g_1(v,D)\x 2D\ti{N}^3.
$$
\end{proof}

We may now complete the proof of Theorem~\ref{thm1}.

\begin{proof}[Proof of Theorem~\ref{thm1}]
Let $C\in\Zz_1(M,\Z)$ be a singular Lipschitz 1-cycle. By Lemma~\ref{lm2}, we can find a representation of $C=\sum_{k=1}^n(v,D) C_k$, where $n(v,D)\leq N(v,D)\x k(v,D)$ and each $C_k$ is a Lipschitz 1-cycle with image contained in some body $\Bb_j^i$ and the total mass satisfies
$$\sum_{k=1}^n\mass_1(C_k)\leq(2B(v)+1)^{k(v,D)}\x \mass_1(C).$$

Now by Lemma~\ref{lm4}, each $C_k$ either bounds a 2-chain in $\Tt_j^i$, or it is homologous to some $C'_k$ such that the image of $C'_k$ is contained in $\Nn_j^i\su \Bb_j^i\cap\Bb_j^{i-1}$.

In the first case, Lemma~\ref{lm5} tells us that $C_k$ bounds a 2-chain $E_k$ with
$$\mass_2(E_k)\leq g_1(v,D)\mass_1(C_k).$$

And in the second case, we apply Lemma~\ref{lm6} to obtain a 2-chain $E_k$ with
$$\mass_2(E_k)\leq g_1(v,D)\mass_1(C_k)+g_2(v,D)h(v).$$
If we take the sum over all $n$, $\sum_{k=1}^n C_k$ bounds 2-chain with 2-mass bounded by
$$g_1(v,D) \x \sum_{k=1}^n\mass_1(C_k)+g_2(v,D)\x n(v,D)\x h(v).$$ Therefore we conclude that $C$ bounds 2-chain with 2-mass bounded by
 $$f_1(v,D)\mass_1(C)+f_2(v,D),$$

where
$$f_1(v,D)=g_1(v,D)\x (2B(v)+1)^{k(v,D)}$$

and
$$f_2(v,D)=g_2(v,D)\x h(v)\x N(v,D)\x k(v,D).$$
\end{proof}

\section*{Acknowledgement}
The authors are grateful to Alexander Nabutovsky and Regina Rotman for suggesting this problem and numerous helpful discussions.
We also thank Vitali Kapovitch and Robert Haslhofer for useful discussions about the epsilon-regularity theorem. We thank Aaron Naber for answering several questions about his work \cite{cheeger2014regularity} with Jeff Cheeger.

\bigskip
\bibliographystyle{alpha}
\bibliography{mybib}

\begin{thebibliography}{GAL17}

\bibitem[AC91]{anderson1991diffeomorphism}
Michael~T Anderson and Jeff Cheeger.
\newblock Diffeomorphism finiteness for manifolds with ricci curvature and
  $l^2$-norm of curvature bounded.
\newblock {\em Geometric \& Functional Analysis GAFA}, 1(3):231--252, 1991.

\bibitem[And89]{anderson1989ricci}
Michael~T Anderson.
\newblock Ricci curvature bounds and einstein metrics on compact manifolds.
\newblock {\em Journal of the American Mathematical Society}, 2(3):455--490,
  1989.

\bibitem[And92]{anderson1992thel}
Michael~T Anderson.
\newblock The $l^2$ structure of moduli spaces of einstein metrics on
  4-manifolds.
\newblock {\em Geometric and Functional Analysis}, 2(1):29--89, 1992.

\bibitem[CC96]{cheeger1996lower}
Jeff Cheeger and Tobias~H Colding.
\newblock Lower bounds on ricci curvature and the almost rigidity of warped
  products.
\newblock {\em Annals of mathematics}, 144(1):189--237, 1996.

\bibitem[CC97]{cheeger1997structure}
Jeff Cheeger and Tobias~H Colding.
\newblock On the structure of spaces with ricci curvature bounded below. i.
\newblock {\em Journal of Differential Geometry}, 46(3):406--480, 1997.

\bibitem[CN14]{cheeger2014regularity}
Jeff Cheeger and Aaron Naber.
\newblock Regularity of einstein manifolds and the codimension 4 conjecture.
\newblock {\em arXiv preprint arXiv:1406.6534}, 2014.

\bibitem[Col97]{colding1997ricci}
Tobias~H Colding.
\newblock Ricci curvature and volume convergence.
\newblock {\em Annals of mathematics}, 145(3):477--501, 1997.

\bibitem[Coo15]{coornaert2015topological}
Michel Coornaert.
\newblock {\em Topological dimension and dynamical systems}.
\newblock Springer, 2015.

\bibitem[Fed14]{federer2014geometric}
Herbert Federer.
\newblock {\em Geometric measure theory}.
\newblock Springer, 2014.

\bibitem[GAL17]{glynn2017width}
Parker Glynn-Adey and Yevgeny Liokumovich.
\newblock Width, ricci curvature, and minimal hypersurfaces.
\newblock {\em Journal of Differential Geometry}, 105(1):33--54, 2017.

\bibitem[Gro83]{gromov1983filling}
Mikhail Gromov.
\newblock Filling riemannian manifolds.
\newblock {\em Journal of Differential Geometry}, 18(1):1--147, 1983.

\bibitem[Gro07]{gromov2007metric}
Mikhail Gromov.
\newblock {\em Metric structures for Riemannian and non-Riemannian spaces}.
\newblock Springer Science \& Business Media, 2007.

\bibitem[Lic62]{lickorish1962representation}
WB~Raymond Lickorish.
\newblock A representation of orientable combinatorial 3-manifolds.
\newblock {\em Annals of Mathematics}, pages 531--540, 1962.

\bibitem[NR03]{nabutovsky2003upper}
Alexander Nabutovsky and Regina Rotman.
\newblock Upper bounds on the length of a shortest closed geodesic and
  quantitative hurewicz theorem.
\newblock {\em Journal of the European Mathematical Society}, 5(3):203--244,
  2003.

\bibitem[NR04]{nabutovsky2004volume}
Alexander Nabutovsky and Regina Rotman.
\newblock Volume, diameter and the minimal mass of a stationary 1-cycle.
\newblock {\em Geometric and Functional Analysis}, 14(4):748--790, 2004.

\bibitem[NR06]{nabutovsky2006curvature}
Alexander Nabutovsky and Regina Rotman.
\newblock Curvature-free upper bounds for the smallest area of a minimal
  surface.
\newblock {\em Geometric \& Functional Analysis GAFA}, 16(2):453--475, 2006.

\bibitem[Pit14]{pitts2014existence}
Jon~T Pitts.
\newblock {\em Existence and Regularity of Minimal Surfaces on Riemannian
  Manifolds.(MN-27)}.
\newblock Princeton University Press, 2014.

\bibitem[SS81]{schoen1981regularity}
Richard Schoen and Leon Simon.
\newblock Regularity of stable minimal hypersurfaces.
\newblock {\em Communications on Pure and Applied Mathematics}, 34(6):741--797,
  1981.

\end{thebibliography}

\end{document}